\documentclass[11pt]{article}

%% Packages
\usepackage{amsthm,amsmath,amsfonts,amssymb,mathrsfs}
\usepackage[numbers]{natbib}
\usepackage[colorlinks,citecolor=blue,urlcolor=blue]{hyperref}
\usepackage{graphicx,color}

\setlength{\topmargin}{-1.0cm}
\setlength{\oddsidemargin}{0cm}
\setlength{\evensidemargin}{0cm}
\setlength{\textwidth}{16.0cm}
\setlength{\textheight}{23.0cm}
\setlength{\parindent}{12pt}
\setlength{\parskip}{4pt}

\theoremstyle{plain}
\newtheorem{theorem}{Theorem}[section]
\newtheorem{corollary}[theorem]{Corollary}

\newtheorem{lemma}[theorem]{Lemma}

\theoremstyle{definition}

\theoremstyle{remark}
\newtheorem{remark}[theorem]{Remark}

\numberwithin{equation}{section}
\date{}

\title{Conditional central limit theorem for critical branching random walk \thanks{This work was supported in part by  NSFC (No.~11971062), and the National Key Research and Development Program of China (No.~2020YFA0712900).}}
\author{Wenming Hong \thanks{School of Mathematical Sciences \& Laboratory of Mathematics and Complex Systems, Beijing Normal University, Beijing 100875, P.R. China. Email: wmhong@bnu.edu.cn} ~and~ Shengli Liang \thanks{Corresponding author. School of Mathematical Sciences \& Laboratory of Mathematics and Complex Systems, Beijing Normal University, Beijing 100875, P.R. China. Email: liangshengli@mail.bnu.edu.cn}}

\begin{document}
	
	\maketitle	
	\begin{center}
		\begin{minipage}{12cm}
			\begin{center}\textbf{Abstract}\end{center}
			%\footnotesize
			Consider a critical branching random walk on $\mathbb{R}$. Let $Z^{(n)}(A)$ be the number of individuals in the $n$-th generation located in $A\in \mathcal{B}(\mathbb{R})$ and $Z_{n}:=Z^{(n)}(\mathbb{R})$ denote the population of the $n$-th generation. We prove that, under some conditions, for all $x\in \mathbb{R}$, as $n\to \infty$,
			$$ \mathcal{L}\left(\frac{Z^{(n)}(-\infty, \sqrt{n} x]}{n} ~\bigg |~ Z_{n}>0\right) \Longrightarrow\mathcal{L}\left(Y(x)\right),$$
			where $\Rightarrow$ means weak convergence and $Y(x)$ is a random variable whose distribution is specified by its moments.
			
			\bigskip
			
			\mbox{}\textbf{Keywords}: Branching random walk ; critical Galton--Watson process ; reduced process ; conditional central limit theorem. \\
			\mbox{}\textbf{Mathematics Subject Classification}: Primary 60J80; secondary 60F05.
			
		\end{minipage}
	\end{center}

\section{Introduction and main result}

Consider a discrete-time \textit{branching random walk} (BRW) on the real line, which can be viewed as a natural extension of the Galton--Watson (G--W) process for the additional spatial structure. More precisely, the process starts with an initial particle positioned at the origin. It dies at time $1$ and gives birth to a random number of children who form the first generation according to the offspring distribution $p=\left\{p_{k}\right\}_{k\geq0}$, meanwhile, each of the individuals are independently positioned (with respect to their parent) according to the same probability measure $\nu$. By induction, each particle alive at generation $n-1$ dies at time $n$, and gives birth independently of all others to its own children who are in the $n$-th generation and are located in a relative position according to $p$ and $\nu$ respectively. The process goes on as described above if there are particles alive. We assume that the reproduction and displacement mechanisms are independent.

Let $Z^{(n)}(-\infty, x]$ be the number of particles of the $n$-th generation whose positions are in $(-\infty, x]$ and $Z_{n}:=Z^{(n)}(\mathbb{R})$ be the total number of particles in the $n$-th generation. $Z_{n}$ is a \textit{Galton--Watson process}, which called \textit{supercritical, critical, or subcritical} according to the mean number of offspring $m=E Z_{1}>1$, $m=1$, or $m<1$ respectively.

For a supercritical BRW (that is $m>1$), it was first conjectured by Harris (\cite{Har63}, p.75) that if $\nu$ has mean zero and variance one, then
$$\lim_{n \rightarrow \infty} \frac{Z^{(n)}(-\infty, \sqrt{n} x]}{m^{n}} =\Phi(x)W ~~\text{in probability},$$
where
$$\Phi(x)=\int_{-\infty}^{x} \frac{1}{\sqrt{2 \pi}} e^{-\frac{y^{2}}{2}} dy,~~~~W=\lim _{n \rightarrow \infty} \frac{Z_{n}}{m^{n}}.$$
Results on central limit theorem of this type have been established and extended by many authors. Stam \cite{Sta66} and Kaplan and Asmussen \cite{KA76} proved the conjecture and obtained that the convergence holds almost surely when $E(Z^2_1)<\infty$ and $E(Z_1\log^{1+\epsilon} Z_1)<\infty$, respectively. %the case where each particle’s position from its parent is identically distributed and independent of its own reproduction. 
Klebaner \cite{Kle82} and Biggins \cite{Big90} extended these results to the branching random walk in varying environment (generation dependent). Gao, Liu and Wang \cite{GLW14} considered this central limit theorem for the BRW with random environment in time. Recently, Bansaye \cite{Ban19} extended it to the model of branching Markov chain in random environment.

In this paper, we will focus on critical case, i.e. $m=1$. The asymptotic behaviour for the BRW with critical branching mechanism is significantly different from that for supercritical case, since the corresponding G--W process becomes extinct with probability one. For instance, the asymptotic behaviour for the maximal displacement of critical BRW has been investigated by Kesten \cite{Kes95} and Lalley and Shao \cite{LS15}, the former obtained the asymptotic law for the rightmost position at time $n$ conditioned on survival up to time $\beta n$ or on extinction occurring at time $k_{n} \geqslant n$ with $k_{n}/n\to\gamma$, while the latter is devoted to studying the distribution of the rightmost position ever reached by the BRW. These asymptotic law are very different from those in the supercritical case that have been extensively studied over the past decades, see for example \cite{Aid13,Big76,Ham74,HS09,Kin75}. Here, since we will consider the central limit theorem in the critical case, it is more natural to describe this limit theorem by conditioning on survival until time $n$.

Now, we assume that
\begin{equation}\label{ass1}
	m=\mathbf{E} Z_{1}=1, \quad \sigma^{2}:=\operatorname{Var} Z_{1}\in (0,\infty).
\end{equation}
\begin{equation}\label{ass2}
	\int_{-\infty}^{\infty} x \nu(dx)=0, \quad \int_{-\infty}^{\infty} x^{2} \nu(dx)=1.
\end{equation}

We obtain the following conditional central limit theorem for the critical BRW.

\begin{theorem}\label{th1}
	Under the assumptions (\ref{ass1}), (\ref{ass2}) and that $p$ has finite moments $m_r=\sum_{j=0}^{\infty}j^rp_j$ of all orders, we have, for all $x\in \mathbb{R}$, as $n\to \infty$,
	$$\mathcal{L}\left.\left(\frac{Z^{(n)}(-\infty, \sqrt{n} x]}{n}~ \right|~ Z_{n}>0\right) \Longrightarrow\mathcal{L}\left(Y(x)\right),$$
	where $\Rightarrow$ means weak convergence and $Y(x)$ is a random variable whose distribution is determined by its moments $\mu_{r}(x)$:
	$$
	\begin{aligned}
		r=1,~~\mu_{1}(x)=&\frac{\sigma^2}{2}\Phi(x),\\
		r=2,~~\mu_{2}(x)=&\binom{2}{1}\int_{0}^{1}\mathbf{E}\left[\left(\mu_{1}^t\left(x-B_t\right)\right)^2\right]dt=\frac{\sigma^4}{2}\int_{0}^{1}\mathbf{E}\left[\Phi_{1-t}^2\left(x-B_{t}\right)\right]dt,\\
		r\geq3,~~\mu_{r}(x)=&\sum_{i=1}^{r-1}\binom{r}{i}\int_{0}^{1}\mathbf{E}\left[\mu_{i}^t\left(x-B_t\right)\mu_{r-i}^t\left(x-B_t\right)\right]dt,
	\end{aligned}
	$$
	where $B_t$ is the standard Brownian motion starting from zero, $$\Phi_{1-t}(x):=\int_{-\infty}^{x}\frac{1}{\sqrt{2 \pi(1-t)}} e^{-\frac{y^{2}}{2(1-t)}}\, dy,$$ $$\mu_{1}^t\left(x\right):=\frac{\sigma^2}{2}\Phi_{1-t}(x),~~ \mu_{2}^t\left(x\right):=\frac{\sigma^4}{2}\int_{t}^{1}\mathbf{E}\left[\Phi_{1-s}^2\left(x-B_{s}\right)\right]ds,$$
	and for $i\geq3$, $\mu_{i}^t$ is defined by recursion, i.e.,
	$$\mu_{i}^t(x):=\sum_{j=1}^{i-1}\binom{i}{j}\int_{t}^{1}\mathbf{E}\left[\mu_{j}^s\left(x-B_s\right)\mu_{i-j}^s\left(x-B_s\right)\right]ds.$$
\end{theorem}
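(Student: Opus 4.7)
The plan is to use the method of moments. Since by hypothesis the law of $Y(x)$ is determined by its moment sequence, the weak convergence follows once we verify, for every integer $r\geq 1$,
$$\mathbf{E}\left[\left(\frac{Z^{(n)}(-\infty,\sqrt n x]}{n}\right)^{\!r} \,\bigg|\, Z_n>0\right]\longrightarrow \mu_r(x).$$
Because $\{Z_n=0\}$ kills $Z^{(n)}(A)$, this conditional expectation equals $\mathbf{E}[(Z^{(n)}(A))^r]/(n^r\mathbf{P}(Z_n>0))$, and Kolmogorov's asymptotic $\mathbf{P}(Z_n>0)\sim 2/(n\sigma^2)$ reduces the task to establishing
$$M_n^{(r)}(\sqrt n x):=\mathbf{E}\bigl[(Z^{(n)}(-\infty,\sqrt n x])^r\bigr]\sim \frac{2}{\sigma^2}\,n^{r-1}\mu_r(x).$$
For $r=1$ this is just the many-to-one identity $M_n^{(1)}(y)=\mathbf{P}(S_n\leq y)$ (with $S_n$ the random walk of step law $\nu$) combined with the CLT, which gives $\Phi(x)=(2/\sigma^2)\mu_1(x)$. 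I would then argue by strong induction on $r$.

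The inductive step rests on a moment recursion derived from the first-generation decomposition $Z^{(n)}(A)=\sum_{j=1}^{Z_1}Z^{(n-1),(j)}(A-X_j^{(1)})$. Expanding the $r$-th power and grouping $r$-tuples of subtree indices by their kernel set partition yields
$$M_n^{(r)}(y)=\int M_{n-1}^{(r)}(y-z)\,\nu(dz)+\frac{\sigma^2}{2}\sum_{i=1}^{r-1}\binom{r}{i}H_{n,i}(y)\,H_{n,r-i}(y)+\mathcal{R}_n^{(r)}(y),$$
where $H_{n,k}(y)=\int M_{n-1}^{(k)}(y-z)\,\nu(dz)$ and $\mathcal{R}_n^{(r)}$ collects contributions of partitions into $s\geq 3$ blocks, whose weights are the factorial moments $\mathbf{E}[(Z_1)_s]$, finite by the all-moments hypothesis on $p$. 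Iterating the ``diffusive'' first term along the random walk $(S_k)$ produces
$$M_n^{(r)}(y)=\sum_{k=0}^{n-1}\mathbf{E}\!\left[\frac{\sigma^2}{2}\sum_{i=1}^{r-1}\binom{r}{i}H_{n-k,i}(y-S_k)\,H_{n-k,r-i}(y-S_k)+\mathcal{R}_{n-k}^{(r)}(y-S_k)\right]+O(1).$$

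To pass to the limit at $y=\sqrt n x$, I set $t=k/n$ and $m=n-k=(1-t)n$, invoke Donsker's invariance principle to replace $S_k/\sqrt n$ by a Brownian value $B_t$, and use the induction hypothesis on $H_{m,i}(\sqrt n(x-B_t))$; a short scaling check shows that $(1-t)^{i-1}\mu_i\bigl((x-B_t)/\sqrt{1-t}\bigr)$ is exactly the function denoted $\mu_i^t(x-B_t)$ in the theorem (the base case $\mu_1^t=(\sigma^2/2)\Phi_{1-t}$ is immediate, and for $i\geq 2$ it follows from the Brownian scaling $B'_{s(1-t)}\stackrel{d}{=}\sqrt{1-t}\,B'_s$ applied in the inner integrand). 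The Riemann-sum approximation $\sum_{k=0}^{n-1}f(k/n)\sim n\int_0^1 f(t)\,dt$ then gives
$$\frac{M_n^{(r)}(\sqrt n x)}{n^{r-1}}\longrightarrow \frac{2}{\sigma^2}\sum_{i=1}^{r-1}\binom{r}{i}\int_0^1\mathbf{E}\!\left[\mu_i^t(x-B_t)\mu_{r-i}^t(x-B_t)\right]dt=\frac{2}{\sigma^2}\mu_r(x),$$
while $\mathcal{R}_{n-k}^{(r)}=O((n-k)^{r-3})$ sums to $O(n^{r-2})$ and is therefore negligible. Finally, iterating the recursion produces the Carleman-type bound $\mu_r(x)\leq C^r r!$, so the limit distribution is indeed moment-determined.

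The main obstacle will be the rigorous justification of this limit passage: one needs uniform-in-$y$ asymptotics for $M_m^{(i)}(y)$ on the natural spatial scale, must control the shift by $z\sim\nu$ in the integral defining $H_{m,k}$ (harmless because $\nu$ has unit variance while the relevant argument moves on scale $\sqrt n$), and requires an integrable domination permitting passage to the Brownian expectation. These points rely on uniform polynomial-in-$r!$ upper bounds for the moments, which the all-moments hypothesis on $p$ delivers through the factorial moments $\mathbf{E}[(Z_1)_s]$. The contributions from $k$ close to $0$ (where Brownian scaling degenerates) and $k$ close to $n$ (where only finitely many generations of branching remain) must be handled by separate small-parameter estimates, using the trivial bound $M_m^{(1)}(y)\leq 1$ to absorb endpoint errors.
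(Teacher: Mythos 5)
Your proposal is correct in outline but follows a genuinely different route from the paper. The paper conditions first: it passes to the reduced Galton--Watson tree conditioned on $\{Z_n>0\}$ (via the Fleischmann--Siegmund-Schultze generating-function identities, which make the conditioned reduced tree a time-inhomogeneous branching random walk), and then computes the $r$-th conditional moments by adapting the Harris--Roberts many-to-few formula with $r$ spines, classifying contributions according to the splitting pattern of the marks and showing that only binary splits survive in the limit. You instead compute the \emph{unconditional} moments $M_n^{(r)}(y)=\mathbf{E}[(Z^{(n)}(-\infty,y])^r]$ by the classical first-generation decomposition, expand the $r$-th power over set partitions of the indices (one-block terms give the diffusive convolution, two-block terms carry the weight $\mathbf{E}[Z_1(Z_1-1)]=\sigma^2$, and partitions into $s\ge3$ blocks contribute only $O(m^{r-3})$ per generation, hence $O(n^{r-2})$ after summation), iterate along the random walk, and only at the end divide by Kolmogorov's asymptotic $\mathbf{P}(Z_n>0)\sim 2/(n\sigma^2)$. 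The two classifications are combinatorially equivalent --- your $s\ge 3$-block remainder plays exactly the role of the paper's ternary-or-worse spine splits $T^{[r]}_1(i_1,i_2,r-i_1-i_2)$, and your two-block weight $\sigma^2/2$ with the double-counting factor matches the paper's $\binom{r}{i}\hat f^{(2)}_{j,n}(1)/2$ computation --- but yours avoids the spine change of measure and the reduced-process machinery entirely, at the cost of losing the genealogical picture that connects the limit to the binary branching Brownian motion of Fleischmann and Siegmund-Schultze. Your scaling identity $(1-t)^{i-1}\mu_i\bigl(w/\sqrt{1-t}\bigr)=\mu_i^t(w)$ is a clean addition: the paper only defines the $\mu_i^t$ by recursion and never records this closed relation to the $\mu_i$. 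The remaining technical work you flag (uniform-in-$y$ asymptotics on the $\sqrt n$ scale, domination for the Donsker/Riemann-sum passage, endpoint generations) is real but of exactly the same nature and level of detail as what the paper itself leaves to the reader in its $\left(\ref{limit of second moment}\right)$-type arguments, and the Carleman bound $\mu_{2r}(x)\le\mu_{2r}(\infty)=(2r)!\sigma^{4r}/2^{2r}$ you invoke for moment-determinacy is the same as the paper's.
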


\begin{remark}\label{rem1}
	\
	
	1. Taking $x\to\infty$, we have $\mu_{r}(\infty)=\frac{r!\sigma^{2r}}{2^r}$, Theorem \ref{th1} indicates that
	$$\mathcal{L}\left.\left(\frac{Z_{n}}{n}~ \right|~ Z_{n}>0\right) \Longrightarrow\mathcal{L}\left(Y\right),$$
	where $Y$ is a exponential random variable with mean $\sigma^{2}/{2}$. Indeed, taking $\sigma^2=2$ for simplicity, we have $\mu_{r}^t(\infty)=r!(1-t)^{r-1}$ by induction. In fact, $\mu_{1}^t(\infty)=1$, $\mu_{2}^t(\infty)=2(1-t)$. Assume that, for any $r\geq3$, $\mu_{r-1}^t(\infty)=(r-1)!(1-t)^{r-2}$. Then
	$$
	\begin{aligned}
		\mu_{r}^t(\infty)&=\sum_{i=1}^{r-1}\binom{r}{i}\int_{t}^{1}\mu_{i}^s(\infty)\mu_{r-i}^s(\infty)ds\\
		&=\sum_{i=1}^{r-1}\frac{r\cdots(r-i+1)}{i!}\int_{t}^{1}i!(1-s)^{i-1}(r-i)!(1-s)^{r-i-1}ds\\
		&=\sum_{i=1}^{r-1}\int_{t}^{1}r!(1-s)^{r-2}ds\\
		&=r!(1-t)^{r-1}.
	\end{aligned}
    $$
    It follows from the same argument as above that $\mu_{r}(\infty)=r!$.
%    $$
%    \begin{aligned}
%    	\mu_{r}(\infty)&=\sum_{i=1}^{r-1}\binom{r}{i}\int_{0}^{1}\mu_{i}^t(\infty)\mu_{r-i}^t(\infty)ds\\
%    	&=\sum_{i=1}^{r-1}\frac{r\cdots(r-i+1)}{i!}\int_{0}^{1}i!(1-t)^{i-1}(r-i)!(1-t)^{r-i-1}dt\\
%    	&=\sum_{i=1}^{r-1}\int_{0}^{1}r!(1-t)^{r-2}dt\\
%    	&=r!.
%    \end{aligned}
%    $$
	This is the classical theorem proved by Yaglom \cite{Yag47} under a third moment assumption (removed by Kesten et al. \cite{KNS66}). We refer to Lyons et al. \cite{LPP95}, Geiger \cite{Gei00} and Ren et al. \cite{RSS18} for elegant probabilistic proofs of Yaglom's theorem.
	
	2. It is worth noting that the limit distribution $\mathcal{L}\left(Y(x)\right)$ is related to some binary branching Brownian motion obtained by Fleischmann and Siegmund-Schultze \cite{FS78}. More specifically, the binary branching Brownian motion $\left(X_t,0\leq t<1\right)$ evolves as follows. At time zero, an initial particle moves according to a Brownian motion and splits into two at a time which is uniformly distributed on $\left(0,1\right)$. Then the two particles independently branch and move. Each particle born at time $t$ moves relatively to its position according to a Brownian motion up to a time which is uniformly distributed on $\left(t,1\right)$ and this process continues. The number of particles at time $t$ with position in $(-\infty,x]$ of the process $X_t$, rescaled by $1-t$, converges in law to $Y(x)$ as $t\to1$. %The details are not included in this paper but we may consider it in a future work.
	
	3. The assumption (\ref{ass2}) can be generalized to the case that $\nu$ is in the domain of attraction of a stable law with subtle changes on the proof of Theorem \ref{th1}. More explicitly, we assume that	there exists two positive constant sequences $\left\{a_{n}\right\}$ and $\left\{b_{n}\right\}$ such that, as $n\to\infty$,
	$$\frac{S_{n}-a_{n}}{b_{n}}\Rightarrow \eta,$$
	where $S_n$ is a sum of $n$ independent random variables with common law $\nu$, $\eta$ has a non-degenerate distribution function $G(x)$ and $G(x)$ is continuous. In this case, we have, for all $x\in \mathbb{R}$, as $n\to\infty$,
	$$ \mathcal{L}\left(\frac{Z^{(n)}(-\infty, b_{n}x +a_{n}]}{n} ~\bigg |~ Z_{n}>0\right) \Longrightarrow\mathcal{L}\left(L(x)\right),$$
	where the distribution of $L(x)$ is determined by its moments in a similar way as Theorem \ref{th1} (with the function $\Phi$ and Brownian motion being replaced by function $G$ and stable process, respectively).
	
	4. For the subcritical case, we refer to the recent work \cite{HY23}.
	\qed
\end{remark} 

%%%%%%%%%%%%%%%%%%%%%%%%%%%%%%%%%%%%%%%%%%%%%%%%%%%%%%%%%%%%%%%%%%%%%%%%%%%%%%%%%%%%%%%%%%%%%%%%%%%%%%%%%%%%%%%%%%%%%%%%%%%%%%%%%%%%%%%%%%%%%%%%%%%%%%%%%%

\section{Reduced Galton--Watson Process}\label{section 2}

To prove our main theorem, we calculate the moments of $\left(\left.\frac{Z^{(n)}(-\infty, \sqrt{n} x]}{n}~\right| Z_{n}>0\right)$ and use the result regarding the moment convergence problem, which ensures that the limit distribution is uniquely determined by its moments. By adapting the many-to-few formula to the critical branching random walk, we can calculate explicitly its moments. The key step to formulate the many-to-few lemma in the critical case is based on the observation that to count $Z^{(n)}(A)$, the number of the particles in the $n$-th generation positioned in $A\in \mathcal{B}(\mathbb{R})$, only the ancestors at generation $k$ ($k\leqslant n$) having at least one offspring at generation $n$ make contributions. The process by which these ancestors are composed is called ``reduced branching process" in the literature.

Recall that $\{Z_n, n\geq0\}$ is a G--W process. For each fixed $n$, we denote by $Z_{k, n}$ the number of particles in the original process at time $k\leq n$ having a positive number of descendants at time $n$. That is, $\left\{Z_{k, n},0\leqslant k \leqslant n\right\}$ is the so-called \textit{reduced G--W process}, which is obtained by removing all individuals of the original process that do not have descendants at time $n$. In \cite{FS77}, Fleischmann and Siegmund-Schultze proved a functional conditional limit theorem for the reduced critical G--W process, which says that $Z_{nt,n} (0\leqslant t<1)$, given that $Z_{n}>0$, has a limit distribution specified by a Yule process with a suitable transformation of time. Also in \cite{FS78}, they obtained a conditional invariance principle establishing convergence of the reduced critical branching random walk (under contractions of time and space) to a certain binary branching Brownian motion on $[0,1)$. In our proof, we would rather make use of the behaviour of the reduced process at time $n$ than that at time $nt$.

Let us denote by $\hat{Z}^{(n)}$ a point process with law
$$\mathcal{L}\left(Z^{(n)} \mid Z_{n}>0\right)$$
and $\hat{Z}_{k, n}$ a random variable with law
$$\mathcal{L}\left(Z_{k, n} \mid Z_{n}>0\right).$$
Let $f$ be the generating function of offspring distribution $p$, that is
$$f(s):=\sum_{j=0}^{\infty} p_j s^{j},\quad s\in[0,1].$$
Let $f_{(n)}$ be the generating function of $Z_n$, by the branching property, we have
$$f_{(n)}(s)=f(f_{(n-1)}(s)),$$
with the convention that $f_{(0)}(s):=s$, $f_{(1)}(s):=f(s)$.

Fleischmann and Siegmund-Schultze \cite{FS77} investigated the reduced critical G--W process and showed that, for any $0 \leq k\leq n$, the generating function of $\hat{Z}_{k,n}$ was given by
\begin{equation}\nonumber
	\hat{f}_{(k),n}(s):=\mathbf{E}\left[s^{\hat{Z}_{k, n}}\right]=\frac{f_{(k)}\left(f_{(n-k)}(0)+s\left(1-f_{(n-k)}(0)\right)\right)-f_{(n)}(0)}{1-f_{(n)}(0)},\quad s\in[0,1].
\end{equation}
In \cite{FS77} (Proposition 1.1), they showed that $\{\hat{Z}_{k,n}; 0\leq k\leq n\}$ is a inhomogeneous G--W process, and the offspring probability generating function of a particle at time $k-1$ $\left(1\leq k\leq n\right)$ is given by
\begin{equation}\nonumber  
	\hat{f}_{k,n}(s):=\mathbf{E}\left[s^{\hat{Z}_{k,n}} \big| \hat{Z}_{k-1,n}=1\right]=\frac{f\left(f_{(n-k)}(0)+s\left(1-f_{(n-k)}(0)\right)\right)-f_{(n-k+1)}(0)}{1-f_{(n-k+1)}(0)}, s\in[0,1].
\end{equation}
Then, we get the following lemma by simple calculations.
\begin{lemma}
	%The $r$-th ($r\geq1$) moment of the offspring distribution of a particle at time $k-1$ $\left(1\leq k\leq n\right)$ is
	Assume that the offspring distribution $p$ has finite moments of all orders, let $\hat{f}_{k,n}^{(r)}$ be the $r\left(r\geq1\right)$-order derivatives of $\hat{f}_{k,n}$, then
	\begin{equation}\label{r-derivatives}
		\hat{f}_{k,n}^{(r)}(1)=\frac{\left(1-f_{(n-k)}(0)\right)^r}{1-f_{(n-k+1)}(0)}f^{(r)}(1).
	\end{equation}
\end{lemma}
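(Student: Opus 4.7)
The plan is to apply the chain rule directly to the explicit formula for $\hat{f}_{k,n}(s)$ displayed just before the lemma. First I would abbreviate $a := f_{(n-k)}(0)$, so that $f_{(n-k+1)}(0) = f(a)$ by the branching identity $f_{(n-k+1)} = f \circ f_{(n-k)}$, and rewrite the generating function as
$$\hat{f}_{k,n}(s) = \frac{f\bigl(a + s(1-a)\bigr) - f(a)}{1 - f(a)}.$$
In this form the dependence on $s$ is isolated in a single composition $s \mapsto a + s(1-a) \mapsto f$, which is affine in $s$, and the denominator is constant in $s$.

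Next I would differentiate $r$ times in $s$. Because the inner map is linear with slope $(1-a)$, each application of the chain rule contributes exactly a factor $(1-a)$, yielding
$$\hat{f}_{k,n}^{(r)}(s) = \frac{(1-a)^r\, f^{(r)}\bigl(a + s(1-a)\bigr)}{1 - f(a)}.$$
Evaluating at $s = 1$ gives $a + (1-a) = 1$, so $f^{(r)}(a + s(1-a))$ collapses to $f^{(r)}(1)$. Substituting back $a = f_{(n-k)}(0)$ and $f(a) = f_{(n-k+1)}(0)$ produces the claimed identity (\ref{r-derivatives}).

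There is no genuine obstacle here: the statement is a one-line consequence of the chain rule together with the branching identity $f_{(n-k+1)} = f \circ f_{(n-k)}$. The only point that deserves a word of justification is the finiteness of $f^{(r)}(1)$, which must be flagged because $s=1$ sits on the boundary of the natural domain of convergence of the power series defining $f$; under the standing assumption that $p$ has finite moments of all orders, one has $f^{(r)}(1) = \mathbf{E}[Z_1(Z_1-1)\cdots(Z_1-r+1)] < \infty$, so differentiation and evaluation at $1$ may be interchanged and the computation is rigorous.
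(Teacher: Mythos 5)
Your proof is correct and is precisely the ``simple calculation'' the paper alludes to without writing out: differentiate the explicit formula for $\hat{f}_{k,n}$ via the chain rule, with the affine inner map contributing $(1-f_{(n-k)}(0))^r$, and use $f_{(n-k+1)}(0)=f(f_{(n-k)}(0))$. The remark about $f^{(r)}(1)$ being finite under the all-moments assumption is an appropriate (and the only) point needing justification.
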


%%%%%%%%%%%%%%%%%%%%%%%%%%%%%%%%%%%%%%%%%%%%%%%%%%%%%%%%%%%%%%%%%%%%%%%%%%%%%%%%%%%%%%%%%%%%%%%%%%%%%%%%%%%%%%%%%%%%%%%%%%%%%%%%%%%%%%%%%%%%%%%%%%%%%%%%%%

\section{Conditioned reduced spatial trees and multiple spines}\label{section 3}

In order to introduce conditioned reduced family trees of spatial branching processes and the many-to-few formula, we recall the standard Ulam--Harris--Neveu formalism for rooted, ordered, locally finite trees.

\subsection{Family trees}

We introduce the space
$$\mathcal{U}:=\{\varnothing\}\cup\bigcup_{n=1}^{\infty}\left(\mathbb{N}^*\right)^n,$$
where $\mathbb{N}^*:=\left\{1,2,\cdots\right\}$. An element in $\mathcal{U}$ is a finite sequence of integers and we think of the elements of $\mathcal{U}$ as the labels of nodes (particles) in a tree. For example, $u=236$ is the sixth child of the third child of the second child of the initial ancestor $\varnothing$. For any two elements $u$ and $v$ of $\mathcal{U}$, let $uv$ be the concatenated element, with the convention that $u\varnothing=\varnothing u=u$. For a given vertex $u=u_1\cdots u_n\in\mathcal{U}$, we denote by $|u|:=n$ its generation (with $u=\varnothing$ if $n=0$) and $\overleftarrow{u}:=u_1\cdots u_{n-1}$ its parent. 

A tree $\tau$ is a subset of $\mathcal{U}$ satisfying the following properties:
\begin{itemize}
	\item $\varnothing\in\tau$;
	\item if $u\in\tau\backslash \left\{\varnothing\right\}$, then $\overleftarrow{u}\in\tau$;
	\item for each $u\in\tau$, there exists $N_u(\tau)\in\left\{0,1,2,\cdots\right\}$ such that for every $j\in\mathbb{N}^*$, $uj\in\tau$ if and only if $1\leq j\leq N_u(\tau)$.
\end{itemize}
The integer $N_u:=N_u(\tau)$ represents the number of offspring of the particle $u\in\tau$. These rooted, ordered, locally finite trees are often called \textit{family trees}. We denote by $\mathcal{T}$ the set of all trees.

%%%%%%%%%%%%%%%%%%%%%%%%%%%%%%%%%%%%%%%%%%%%%%%%%%%%%%%%%%%%%%%%%%%%%%%%%%%%%

\subsection{G--W trees and G--W processes}

Let $\left(N_u,u\in\mathcal{U}\right)$ be a collection of independent random variables with distribution $p$ indexed by $\mathcal{U}$. Denote by $\mathbb{T}$ the random subset of $\mathcal{U}$ defined by
$$\mathbb{T}:=\left\{u=u_1\cdots u_n\in\mathcal{U}:n\geq0,u_j\leq N_{u_1\cdots u_{j-1}},\text{~for all~} 1\leq j\leq n\right\}.$$
Then, $\mathbb{T}$ is a \textit{G--W tree} with offspring distribution $p$ and
$$Z_n:=\#\left\{u \in \mathbb{T}:|u|=n\right\}$$ is the associated G--W process.

%%%%%%%%%%%%%%%%%%%%%%%%%%%%%%%%%%%%%%%%%%%%%%%%%%%%%%%%%%%%%%%%%%%%%%%%%%%%%

\subsection{Random spatial trees and branching random walks}

A \textit{spatial tree} is a tree $\tau\in\mathcal{T}$ enriched with additional spatial motion: for each $u\in\tau$, $V(u)\in\mathbb{R}$ is the position of $u$. Formally, a spatial tree is a set of the form
$$\mathbf{t}:=\left\{\left(u,V(u),u\in\tau\right)\right\}.$$
We denote by $\mathbf{T}$ the set of all spatial trees. For $\mathbf{t}\in\mathbf{T}$, we let
$$G_{n}:=\left\{u:\left(u,V(u)\right)\in\mathbf{t},|u|=n\right\}$$
be the set of all individuals at generation $n$.

A branching random walk with reproduction law $p$ and displacement distribution $\nu$ is a random spatial tree such that
\begin{itemize}
	\item $\tau$ is G--W tree with reproduction law $p$;
	\item for each $u\in\tau$, the offspring of $u$ are born at distances from $V(u)$ which are given by a family of independent random variables with common law $\nu$.
\end{itemize}
Thus, the random measure
$$Z^{(n)}(\cdot):=\sum_{u \in G_{n}} \delta_{V(u)}(\cdot)$$
defines our branching random walk. For every Borel set $A$, the number of particles located in $A$ in the $n$-th generation is defined by
$$Z^{(n)}(A):=\#\left\{u \in G_{n}: V(u) \in A\right\}.$$

%%%%%%%%%%%%%%%%%%%%%%%%%%%%%%%%%%%%%%%%%%%%%%%%%%%%%%%%%%%%%%%%%%%%%%%%%%%%%

\subsection{Reduced random spatial trees conditioned on $\left\{Z_{n}>0\right\}$}

Given $n\in\mathbb{N}$, the reduced random spatial tree is defined by removing all nodes of the original random tree that have no branches at time $n$. By the fact stated in Section \ref{section 2}, the reduced random spatial tree conditioned on the event $\left\{Z_{n}>0\right\}$ is a random spatial tree with inhomogeneous branching mechanism: the reproduction law of a particle $u$ with $|u|=k$ $\left(0\leq k\leq n-1\right)$, denote by $\left\{p_{l}\left(k+1,n\right)\right\}_{l\geq0}$, is determined by the generating function $\hat{f}_{k+1,n}$, the offspring's displacements are given by the same step law $\nu$.

Let $\hat{\mathbb{T}}_{n}$ denote the reduced random spatial tree conditioned on the event $\left\{Z_n>0\right\}$, which we call \textit{conditioned reduced random spatial tree}. For each $0\leq k\leq n$, we denote by
$$\hat{G}_{k,n}:=\left\{u:\left(u,V(u)\right)\in\hat{\mathbb{T}}_{n}, |u|=k\right\}$$
the set of all individuals at generation $k$ of conditioned reduced random spatial tree $\hat{\mathbb{T}}_{n}$. Then, the random measure
$$\hat{Z}^{(k,n)}(\cdot):=\sum_{u \in \hat{G}_{k,n}} \delta_{V(u)}(\cdot)$$
is the conditioned reduced BRW and the random process $\hat{Z}_{k,n}:=\#\hat{G}_{k,n}$ is the conditioned reduced G--W process. Let
\begin{equation}\nonumber
	\hat{G}_{n}:=\hat{G}_{n,n},~~\hat{Z}^{(n)}:=\hat{Z}^{(n,n)},~~\hat{Z}_{n}:=\hat{Z}_{n,n}.
\end{equation}

%%%%%%%%%%%%%%%%%%%%%%%%%%%%%%%%%%%%%%%%%%%%%%%%%%%%%%%%%%%%%%%%%%%%%%%%%%%%%

\subsection{Conditioned reduced random spatial trees with spines}

Given $n\in\mathbb{N}$, we now attach to $\left(\left(u,V(u)\right),u\in\hat{\mathbb{T}}_{n}\right)$ an extra random path $\left(w,V(w)\right)=\left\{w_{k},V\left(w_{k}\right)\right\}_{0\leq k\leq n}$ called spine and write $\xi_{k}:=V\left(w_{k}\right)$. For any $r\geq1$, we denote by $\hat{\tilde{\mathbb{T}}}^{[r]}_{n}$ the conditioned reduced random spatial trees with spines. To give a version of many-to-few formula for the conditioned reduced BRW, we need to define a new probability $\mathbf{Q}^{[r]}$. Under $\mathbf{Q}^{[r]}$ the particles in the conditioned reduced random spatial tree behave as follows:
\begin{itemize}
	\item At time $0$, an initial particle positioned at the origin is carrying $r$ marks $1,2,\cdots,r$. We think of each mark as distinguishing a spine and write $w^1,\cdots,w^r$ for the $r$ spines and $\xi^1,\cdots,\xi^r$ for the positions of these spines.
	\item This particle dies at time $1$ and has children whose number and displacements are determined by the $r$-th sized-biased distribution $\left\{\frac{l^rp_{l}\left(1,n\right)}{\sum_{l=0}^{\infty}l^rp_{l}\left(1,n\right)}\right\}_{l\geq0}$ (with respect to the law $\left\{p_{l}\left(1,n\right)\right\}_{l\geq0}$) and the law $\nu$, respectively. Then, the $r$ marks each choose a particle among its children to follow independently and uniformly at random.
	\item For any $1\leq k\leq n-1$, a particle with $j$ marks at generation $k$ dies at time $k+1$ and gives birth independently of all others to its children whose number and displacements are determined by the $j$-th sized-biased distribution (with respect to the law $\left\{p_{l}\left(k+1,n\right)\right\}_{l\geq0}$) and the law $\nu$, respectively, and the $j$ marks each choose a particle among its children to follow independently and uniformly at random; particles carrying no marks behave just as under $\mathbf{P}$.
	%\item Iterating this procedure several times in which the conditioned reduced random spatial tree with spines, $\hat{\tilde{\mathbb{T}}}^{[r]}_{n}$, arises in a $n$-step construction.
\end{itemize}
Note that, under $\mathbf{Q}^{[r]}$, the particles in the spines give birth to size-biased numbers of children and move to a relative position according to the law $\nu$. The reproduction law depends on how many marks the spine particle is carrying, whereas the position does not.

Let $\mathcal{F}^{[r]}_{n}$ be the filtration containing all information about the conditioned reduced random spatial tree and the $r$ spines up to time $n$. For any $\mathcal{F}^{[r]}_{n}$-measurable $F$, we can express it as the sum
$$F=\sum_{u^1,\cdots, u^r \in \hat{G}_{n}}F\left(u^1,\cdots,u^r\right)\mathbf{1}_{\left\{w^1_{n}=u^1,\cdots,w^r_{n}=u^r\right\}},$$
where the random variable $F\left(u^1,\cdots,u^r\right)$ is $\mathcal{F}_{n}$-measurable and $\mathcal{F}_{n}$ is the natural filtration of the conditioned reduced random spatial tree. Indeed, one can show that if $A\in\mathcal{F}_{n}\cup\left\{\{w^i_{k}=u\}:u\in\mathcal{U},i\leq r, k\leq n\right\}$, then $A=\cup_{u^1,\cdots,u^r\in\mathcal{U}}\left(A(u^1,\cdots,u^r)\cap\left\{w^1_{n}=u^1,\cdots,w^r_{n}=u^r\right\}\right)$ for some collection of sets $A(u^1,\cdots,u^r)\in\mathcal{F}_{n}$. This property is retained on taking countable unions or complements, so it holds for any $A\in\mathcal{F}_{n}$. And then it is easy to check that this representation holds for any $\mathcal{F}^{[r]}_{n}$-measurable function by standard argument.

%%%%%%%%%%%%%%%%%%%%%%%%%%%%%%%%%%%%%%%%%%%%%%%%%%%%%%%%%%%%%%%%%%%%%%%%%%%%%%%%%%%%%%%%%%%%%%%%%%%%%%%%%%%%%%%%%%%%%%%%%%%%%%%%%%%%%%%%%%%%%%%%%%%%%%%%%%

\section{Proofs}\label{section 4}

\subsection{Many-to-few formula for conditioned reduced BRW}

We assume that the assumptions in Theorem \ref{th1} hold throughout this section. For $r\geq1$, we define the $r$-th moment of the offspring distribution of a particle in the conditional reduced random spatial tree $\hat{\mathbb{T}}_{n}$ at time $k-1$ $\left(1\leq k\leq n\right)$ as
$$m_r(k):=\mathbf{E}\left.\left[\left(\hat{Z}_{k,n}\right)^r\right|\hat{Z}_{k-1,n}=1\right]=\sum_{l=0}^{\infty}l^r p_l(k,n).$$
By $\left(\ref{r-derivatives}\right)$, we have
\begin{equation}\label{m1}
	m_1(k)=\frac{1-f_{(n-k)}(0)}{1-f_{(n-k+1)}(0)}
\end{equation}
and
\begin{equation}\label{m2}
	m_2(k)=\frac{\left(1-f_{(n-k)}(0)\right)^2}{1-f_{(n-k+1)}(0)}\sigma^2+\frac{1-f_{(n-k)}(0)}{1-f_{(n-k+1)}(0)}.
\end{equation}

Now we are ready to state a version of many-to-few formula for the conditioned reduced BRW. We begin with the many-to-one and many-to-two formulas.
\begin{lemma}[Many-to-one formula]
	For any $n\in\mathbb{N}$ and any $\mathcal{F}^{[1]}_{n}$ measurable function $F$, we have
	\begin{equation}\label{many-to-one}
		\mathbf{E}\left[\sum_{u \in \hat{G}_{n}} F\left(u\right)\right]=\frac{1}{1-f_{(n)}(0)}\mathbf{E}_{\mathbf{Q}^{[1]}}\left[F\left(w^1_{n}\right) \right].
	\end{equation}
\end{lemma}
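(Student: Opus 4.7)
The plan is to run a Radon--Nikodym computation against an auxiliary tree--with--spine measure $\mathbf{P}^{[1]}$ defined on the same space as $\mathbf{Q}^{[1]}$: under $\mathbf{P}^{[1]}$ one first samples the conditioned reduced random spatial tree $\hat{\mathbb{T}}_n$ from $\mathbf{P}$ and then, given the tree, chooses the spine $(w^1_k)_{0\le k\le n}$ by picking, at each generation and independently of everything else, one child uniformly at random among the $N_{w^1_{k-1}}$ offspring of $w^1_{k-1}$. This is well defined because every vertex of $\hat{\mathbb{T}}_n$ strictly before generation $n$ has at least one descendant at time $n$, hence at least one child; equivalently, $p_0(k,n)=\hat{f}_{k,n}(0)=0$ for $1\le k\le n$. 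The many-to-one identity will then drop out once the Radon--Nikodym derivative of $\mathbf{Q}^{[1]}$ with respect to $\mathbf{P}^{[1]}$ on $\mathcal{F}^{[1]}_n$ is identified and the telescoping inherited from (\ref{m1}) is performed.

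First I would compare the two constructions generation by generation. Non-spine particles at generation $k-1$ reproduce according to $\{p_l(k,n)\}_{l\ge 0}$ under both measures, so they contribute a factor $1$. At the spine vertex $w^1_{k-1}$, the joint probability of producing $N$ children and nominating a specified one as $w^1_k$ equals $p_N(k,n)\cdot(1/N)$ under $\mathbf{P}^{[1]}$ and $(Np_N(k,n)/m_1(k))\cdot(1/N)=p_N(k,n)/m_1(k)$ under $\mathbf{Q}^{[1]}$; their ratio is the per-step factor $N_{w^1_{k-1}}/m_1(k)$. Multiplying over $1\le k\le n$ yields
$$\left.\frac{d\mathbf{Q}^{[1]}}{d\mathbf{P}^{[1]}}\right|_{\mathcal{F}^{[1]}_n}=\prod_{k=1}^{n}\frac{N_{w^1_{k-1}}}{m_1(k)}.$$

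Next, writing an arbitrary $\mathcal{F}^{[1]}_n$-measurable $F$ in the form $F=\sum_{u\in\hat{G}_n}F(u)\mathbf{1}_{\{w^1_n=u\}}$ with each $F(u)$ being $\mathcal{F}_n$-measurable, applying the change of measure, and then conditioning on the tree,
\begin{align*}
\mathbf{E}_{\mathbf{Q}^{[1]}}[F]
&=\mathbf{E}_{\mathbf{P}^{[1]}}\!\left[F\prod_{k=1}^{n}\frac{N_{w^1_{k-1}}}{m_1(k)}\right]\\
&=\mathbf{E}\!\left[\sum_{u\in\hat{G}_n}F(u)\prod_{k=1}^{n}\frac{1}{N_{u_1\cdots u_{k-1}}}\prod_{k=1}^{n}\frac{N_{u_1\cdots u_{k-1}}}{m_1(k)}\right]\\
&=\frac{1}{\prod_{k=1}^{n}m_1(k)}\,\mathbf{E}\!\left[\sum_{u\in\hat{G}_n}F(u)\right],
\end{align*}
the random $N$-factors having cancelled exactly. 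Finally, (\ref{m1}) telescopes as
$$\prod_{k=1}^{n}m_1(k)=\prod_{k=1}^{n}\frac{1-f_{(n-k)}(0)}{1-f_{(n-k+1)}(0)}=\frac{1-f_{(0)}(0)}{1-f_{(n)}(0)}=\frac{1}{1-f_{(n)}(0)},$$
and rearranging yields (\ref{many-to-one}).

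No serious obstacle is expected. The only point that genuinely requires care is matching the ``uniform spine selection'' built into the construction of $\mathbf{Q}^{[1]}$ with the uniform spine added by hand under $\mathbf{P}^{[1]}$, so that the random $1/N$ and $N$ factors cancel cleanly against the size-biasing $N/m_1(k)$ and only the deterministic reciprocal of $\prod_{k=1}^n m_1(k)$ survives, which is exactly what makes the prefactor $1-f_{(n)}(0)$ appear.
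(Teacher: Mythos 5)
Your proof is correct. The only substantive point to verify is the one you flag yourself: the uniform spine choice under your auxiliary measure $\mathbf{P}^{[1]}$ is well defined because $\hat{f}_{k,n}(0)=\bigl(f(f_{(n-k)}(0))-f_{(n-k+1)}(0)\bigr)/\bigl(1-f_{(n-k+1)}(0)\bigr)=0$, so no vertex of the conditioned reduced tree dies before time $n$; with that in place the Radon--Nikodym factor $\prod_{k=1}^{n}N_{w^1_{k-1}}/m_1(k)$, the cancellation against the conditional spine-selection probabilities $\prod_{k=1}^{n}1/N_{u_1\cdots u_{k-1}}$, and the telescoping $\prod_{k=1}^{n}m_1(k)=1/(1-f_{(n)}(0))$ are all exactly right. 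Your route differs from the paper's in presentation rather than in substance: the paper asserts, by induction on $n$, the weighted identity $\mathbf{E}[\sum_{u\in\hat{G}_n}F(u)]=\mathbf{E}_{\mathbf{Q}^{[1]}}[F(w^1_n)\prod_{k=1}^{n}m_1(k)]$ and then applies (\ref{m1}), giving no further detail, whereas you make the mechanism explicit by introducing the intermediate tree-with-uniform-spine measure and computing the likelihood ratio of $\mathbf{Q}^{[1]}$ against it, in the style of Lyons--Pemantle--Peres. What your version buys is a self-contained, non-inductive derivation in which it is transparent why the random offspring numbers disappear and only the deterministic product $\prod_{k}m_1(k)$ survives; what the paper's inductive phrasing buys is brevity and a statement of the weighted identity in the form that generalizes directly to the many-to-few formula used later. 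Either argument is acceptable here.
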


\begin{proof}
	The many-to-one formula has been used in many situations to compute the first moment, cf. \cite{Aid13,Ban19,HR17,HS09,LPP95}. In our setting, we can prove by induction that
	\begin{equation}\nonumber
		\mathbf{E}\left[\sum_{u \in \hat{G}_{n}} F\left(u\right)\right]=\mathbf{E}_{\mathbf{Q}^{[1]}}\left[F\left(w^1_{n}\right)\prod_{k=1}^{n}m_{1}(k)\right].
	\end{equation}
    Then, by $\left(\ref{m1}\right)$, we obtain $\left(\ref{many-to-one}\right)$.
\end{proof}

If we take $F\left(w^1_{n}\right)=\mathbf{1}_{\left\{V\left(w^1_{n}\right)\leq\sqrt{n}x\right\}}=\mathbf{1}_{\left\{\xi^1_{n}\leq\sqrt{n}x\right\}}$, then $F\left(u\right)=\mathbf{1}_{\left\{V(u)\leq\sqrt{n}x\right\}}$, it follows from $\left(\ref{many-to-one}\right)$ that
\begin{equation}\nonumber
	\mathbf{E}\left[\frac{\hat{Z}^{(n)}(-\infty, \sqrt{n} x]}{n}\right]=\frac{1}{n}\mathbf{E}\left[\sum_{u \in \hat{G}_{n}} \mathbf{1}_{\left\{V(u)\leq\sqrt{n}x\right\}}\right]=\frac{1}{n\left(1-f_{(n)}(0)\right)}\mathbf{Q}^{[1]}\left(\xi^1_{n}\leq\sqrt{n}x\right),
\end{equation}
where $\xi^1_{n}$ is the random walk with the following step law
\begin{equation}\label{transition of zeta}
	\mathbf{Q}^{[1]}\left(\xi^1_{1}\in\cdot\mid \xi^1_{0}=x\right)=\nu(\cdot-x).
\end{equation}
Note that
$$n\left(1-f_{(n)}(0)\right)\to\frac{2}{\sigma^{2}}$$
(cf. \cite{AN72}, Chapter I, Section 9, Theorem 1) and
$$\lim_{n\to\infty}\mathbf{Q}^{[1]}\left(\xi^1_{n}\leq\sqrt{n}x\right)=\Phi(x).$$
Then, we immediately obtain
\begin{corollary}[Limit of first moment]\label{first moment}
	\begin{equation}\label{1-moment} 
		\lim_{n\to\infty}\mathbf{E}\left[\frac{\hat{Z}^{(n)}(-\infty, \sqrt{n} x]}{n}\right]=\frac{\sigma^2}{2}\Phi(x).
	\end{equation}
\end{corollary}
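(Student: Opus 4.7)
The plan is to apply the many-to-one formula (\ref{many-to-one}) with a threshold indicator and then separately extract the asymptotics of the two resulting factors. First I would take $F(u)=\mathbf{1}_{\{V(u)\le\sqrt{n}x\}}$; on the spine side this becomes $\mathbf{1}_{\{\xi^1_n\le\sqrt{n}x\}}$, so (\ref{many-to-one}) yields
$$\mathbf{E}\!\left[\frac{\hat{Z}^{(n)}(-\infty,\sqrt{n}x]}{n}\right] = \frac{1}{n\bigl(1-f_{(n)}(0)\bigr)}\,\mathbf{Q}^{[1]}\!\left(\xi^1_n\le\sqrt{n}x\right).$$
This identity neatly separates a purely population-theoretic quantity (the survival probability of the critical G--W process) from a purely spatial one (the one-dimensional marginal of the spine position at time $n$).

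I would then analyze the two factors independently. The prefactor $n(1-f_{(n)}(0))$ is exactly the Kolmogorov survival asymptotic for a critical Galton--Watson process with finite offspring variance $\sigma^2$, so by the classical result in Athreya--Ney (Chapter I, Section 9, Theorem 1) it converges to $2/\sigma^2$. For the spine factor, the transition law (\ref{transition of zeta}) identifies $(\xi^1_k)_{0\le k\le n}$ under $\mathbf{Q}^{[1]}$ with an ordinary random walk whose step distribution is $\nu$; by assumption (\ref{ass2}) this step law is centered with unit variance, so the classical central limit theorem gives $\xi^1_n/\sqrt{n}\Rightarrow\mathcal{N}(0,1)$, and continuity of the standard normal distribution function then yields $\mathbf{Q}^{[1]}(\xi^1_n\le\sqrt{n}x)\to\Phi(x)$.

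Multiplying the two limits gives $(\sigma^2/2)\Phi(x)$, which is the claimed value. I do not expect any serious obstacle here: the many-to-one formula does all the structural work of turning a branching-process expectation into a single-particle random-walk calculation, and the two remaining asymptotics are standard classical results applied independently. The only point deserving a brief remark is that, although the offspring law along the spine is size-biased under $\mathbf{Q}^{[1]}$, the functional $F$ depends only on the spine position, and by construction the spatial displacements along the spine are still i.i.d.~$\nu$, so the CLT applies to $\xi^1_n$ without modification.
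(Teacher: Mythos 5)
Your proposal is correct and follows exactly the same route as the paper: apply the many-to-one formula with $F(u)=\mathbf{1}_{\{V(u)\le\sqrt{n}x\}}$, then combine the Kolmogorov asymptotic $n(1-f_{(n)}(0))\to 2/\sigma^2$ with the classical CLT for the spine random walk under $\mathbf{Q}^{[1]}$. Your closing remark that the spine's spatial displacements remain i.i.d.\ with law $\nu$ despite the size-biased reproduction is a point the paper makes in its construction of $\mathbf{Q}^{[r]}$, so nothing is missing.
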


To compute the second moment, we need the following many-to-two formula.
\begin{lemma}[many-to-two formula]
	For any $n\in\mathbb{N}$ and any $\mathcal{F}^{[2]}_{n}$ measurable function $F$, we have 
	\begin{equation}\label{many-to-two}
		\begin{aligned}
			&\mathbf{E}\left[\sum_{u^1, u^2 \in \hat{G}_{n}} F\left(u^1,u^2\right)\right]\\
			=&\frac{1}{1-f_{(n)}(0)}\mathbf{E}_{\mathbf{Q}^{[2]}}\left[F\left(w_{n}^1, w_{n}^1\right)\right]+\frac{\sigma^2}{1-f_{(n)}(0)}\sum_{j=1}^{n} \mathbf{E}_{\mathbf{Q}^{[2]}}\left[F\left(w_{j,n}^1, w_{j,n}^2\right)\right],
		\end{aligned}
	\end{equation}
	where $w^1_{n}$ is the spine carrying two marks, $w^1_{j,n}$ and $w^2_{j,n}$ are two spines having the same trajectory before time $j$ and splitting at time $j$.
\end{lemma}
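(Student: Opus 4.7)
The plan is to decompose the double sum on the left according to the most recent common ancestor (MRCA) of $u^1$ and $u^2$ in $\hat{\mathbb{T}}_{n}$: either $u^1=u^2$, or there is a unique $j\in\{1,\dots,n\}$ and a unique $v\in\hat{G}_{j-1,n}$ such that $u^1,u^2$ descend from two distinct reduced-tree children of $v$, in which case the corresponding spines share a common trajectory up to time $j-1$ and split at time $j$. Writing $\hat{N}_v$ for the number of reduced-tree children of $v$ and $\hat{G}_n^{vi}$ for the set of gen-$n$ descendants of $vi$ in $\hat{\mathbb{T}}_n$, this yields
\[\sum_{u^1,u^2\in\hat{G}_n}F(u^1,u^2)=\sum_{u\in\hat{G}_n}F(u,u)+\sum_{j=1}^{n}\sum_{v\in\hat{G}_{j-1,n}}\sum_{\substack{i_1\ne i_2\\1\le i_1,i_2\le\hat{N}_v}}\sum_{\substack{u^1\in\hat{G}_n^{vi_1}\\u^2\in\hat{G}_n^{vi_2}}}F(u^1,u^2).\]

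The diagonal term is handled directly by the many-to-one formula \eqref{many-to-one}, giving $\frac{1}{1-f_{(n)}(0)}\mathbf{E}_{\mathbf{Q}^{[1]}}[F(w^1_n,w^1_n)]$. Because the spine's displacement law under $\mathbf{Q}^{[r]}$ is always $\nu$ irrespective of the mark count, the spine position at time $n$ has the same distribution under $\mathbf{Q}^{[1]}$ and $\mathbf{Q}^{[2]}$, so this already agrees with the first term on the right.

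The heart of the proof is the off-diagonal contribution $S_j$ at split time $j$. I would condition on $v\in\hat{G}_{j-1,n}$ together with its reduced offspring count $\hat{N}_v$; by the branching property, the subtrees rooted at two distinct children $vi_1,vi_2$ are conditionally independent inhomogeneous conditioned reduced trees on the time interval $[j,n]$. Applying many-to-one inside each subtree contributes a factor $\prod_{k=j+1}^{n}m_1(k)=(1-f_{(n-j)}(0))^{-1}$ per subtree, by the same telescoping that proves \eqref{many-to-one}. The sum over ordered pairs of distinct children of $v$ produces
\[\mathbf{E}[\hat{N}_v(\hat{N}_v-1)]=m_2(j)-m_1(j)=\sigma^{2}\frac{(1-f_{(n-j)}(0))^2}{1-f_{(n-j+1)}(0)}\]
directly from \eqref{m1}--\eqref{m2}. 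Finally, applying many-to-one to the outer sum over $v\in\hat{G}_{j-1,n}$ (the same inductive argument yields the prefactor $\prod_{l=1}^{j-1}m_1(l)=(1-f_{(n-j+1)}(0))/(1-f_{(n)}(0))$) supplies the last factor. Multiplying these four pieces, the two $(1-f_{(n-j)}(0))$ factors and the $(1-f_{(n-j+1)}(0))$ factors cancel completely, leaving the $j$-independent coefficient $\sigma^2/(1-f_{(n)}(0))$. The residual expectation---over a common $\nu$-random walk of length $j-1$ followed by two independent continuations of length $n-j+1$---matches by construction the joint law of $(V(w^1_{j,n}),V(w^2_{j,n}))$ under $\mathbf{Q}^{[2]}$.

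The main obstacle is the bookkeeping: one must track three separate many-to-one telescoping products and recognise the clean algebraic identity for $m_2(j)-m_1(j)$, so that the four contributions cancel precisely to produce the simple coefficient $\sigma^{2}/(1-f_{(n)}(0))$ independent of $j$. Once this cancellation is in hand, the identification of the residual expectation with the two-spine law under $\mathbf{Q}^{[2]}$ is formal, since under any $\mathbf{Q}^{[r]}$ all displacements along the spines are i.i.d.\ with law $\nu$.
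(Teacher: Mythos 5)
Your argument is correct and produces exactly the coefficients in \eqref{many-to-two}, but it reaches them by a more self-contained route than the paper. The paper does not decompose the double sum by hand: it invokes Lemma 8 of Harris and Roberts \cite{HR17} (with martingale $\zeta\equiv 1$, adapted to the inhomogeneous offspring laws $\{p_l(k,n)\}$), organises the resulting expression according to the first split time $T^{[2]}$ of the two marks under $\mathbf{Q}^{[2]}$, computes $\mathbf{Q}^{[2]}(T^{[2]}=j)=\prod_{k=1}^{j-1}\frac{m_1(k)}{m_2(k)}\left(1-\frac{m_1(j)}{m_2(j)}\right)$, and then verifies that $\prod_{k=1}^{j}m_2(k)\prod_{k=j+1}^{n}\left(m_1(k)\right)^2\,\mathbf{Q}^{[2]}(T^{[2]}=j)=\sigma^2/(1-f_{(n)}(0))$. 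Expanding that product gives precisely your four factors --- $\prod_{k=1}^{j-1}m_1(k)$ for the stem, $m_2(j)-m_1(j)=\mathbf{E}[\hat N_v(\hat N_v-1)]=\hat f^{(2)}_{j,n}(1)$ at the split, and one telescoping product $\prod_{k=j+1}^{n}m_1(k)$ for each of the two subtrees --- so your MRCA decomposition is exactly the combinatorial content that the citation hides. What your version buys is independence from \cite{HR17}: you need only that $\hat{\mathbb{T}}_n$ is an inhomogeneous G--W tree (Proposition 1.1 of \cite{FS77}, quoted in Section \ref{section 2}) together with the branching property. What the paper's version buys is reusability: the same spine change-of-measure machinery yields the $r$-spine formula \eqref{many-to-few}, where a bare-hands split-by-split decomposition turns into the lengthy case analysis seen in the proof of Corollary \ref{r-th moment}. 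One caveat on scope: your factorisation of the off-diagonal term into conditionally independent subtree contributions, and your identification of the residual expectation with the joint law of $(V(w^1_{j,n}),V(w^2_{j,n}))$, tacitly assume that $F(u^1,u^2)$ depends only on the two ancestral position paths (as it does in every application in the paper); for a genuinely arbitrary $\mathcal{F}^{[2]}_n$-measurable $F$, which may see the whole size-biased tree under $\mathbf{Q}^{[2]}$, one really does need the change-of-measure formulation of \cite{HR17}. Your observation that the diagonal term may be written under $\mathbf{Q}^{[1]}$ or $\mathbf{Q}^{[2]}$ interchangeably, because spine displacements have law $\nu$ irrespective of the mark count, is also the correct justification for the form of the first term in the paper's statement.
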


\begin{proof}
	For a version of the many-to-two formula for the conditioned reduced BRW, we apply the many-to-two formula with martingale $\zeta \equiv 1$ in Lemma 8 of Harris and Roberts \cite{HR17}. In fact, the result in \cite{HR17} is for time-homogeneous branching random walk and the conditioned reduced BRW is time-inhomogeneous. Checking that the proof of \cite{HR17}, one only needs to replace the moment of offspring distribution $\left\{p_{l}\right\}_{l\geq0}$ with that of $\left\{p_{l}\left(k,n\right)\right\}_{l\geq0}$ at time $k-1$. 
	
	Let $T^{[2]}$ denote the first split time at which marks $1$ and $2$ are carried by different particles. Under $\mathbf{Q}^{[2]}$ the initial particle which carries marks $1$ and $2$ branches at time $1$ into $l$ particles with probability $l^2 p_l(1,n)/m_2(1)$. At such a branching event, the two marks follow the same particle with probability $1/l$. Thus
	$$\mathbf{Q}^{[2]}(T^{[2]}>1)=\frac{m_1(1)}{m_2(1)}.$$
	The distribution of $T^{[2]}$ is determined by first and second moments of offspring distribution. Indeed, under $\mathbf{Q}^{[2]}$ the particle carrying two marks branches according to $\left\{l^2 p_l(k,n)/m_2(k)\right\}$ at time $k$ and the two marks follow the same particle with probability $1/l$. Hence,
	\begin{equation}\label{tail law of T2}
		\mathbf{Q}^{[2]}(T^{[2]}>i)=\prod_{k=1}^{i}\frac{m_1(k)}{m_2(k)},\quad i\geq1,
	\end{equation}
    and
	\begin{equation}\label{law of T2}
		\mathbf{Q}^{[2]}(T^{[2]}=i)=\mathbf{Q}^{[2]}(T^{[2]}>i-1)-\mathbf{Q}^{[2]}(T^{[2]}>i)=\prod_{k=1}^{i-1}\frac{m_1(k)}{m_2(k)}\left(1-\frac{m_1(i)}{m_2(i)}\right),
	\end{equation}
	with the convention that $\prod_{k=1}^0:=1$.
	
	Since the reproduction and position of an individual are independent, writing out the many-to-two formula of \cite{HR17} and simplifying, we get
	\begin{equation}\nonumber
		\begin{aligned}
			&\mathbf{E}\left[\sum_{u^1, u^2 \in \hat{G}_{n}} F\left(u^1,u^2\right)\right]\\
			=&\mathbf{E}_{\mathbf{Q}^{[2]}}\left[F\left(w_{n}^1, w_{n}^1\right)\prod_{k=1}^{n}m_2(k)\right]\mathbf{Q}^{[2]}\left(T^{[2]}>n\right)\\
			&+\sum_{j=1}^{n}\mathbf{E}_{\mathbf{Q}^{[2]}}\left[F\left(w^1_{j,n}, w^2_{j,n}\right)\prod_{k=1}^{j}m_2(k)\prod_{k=j+1}^{n}\left(m_1(k)\right)^2\right]\mathbf{Q}^{[2]}\left(T^{[2]}=j\right),
		\end{aligned}
	\end{equation}
    where $w^1_{n}$ is the spine carrying two marks, $w^1_{j,n}$ and $w^2_{j,n}$ are two spines having the same trajectory before time $j$ and splitting at time $j$.
    
    It follows from $\left(\ref{m1}\right)$, $\left(\ref{m2}\right)$, $\left(\ref{tail law of T2}\right)$ and $\left(\ref{law of T2}\right)$ that
    \begin{equation}\nonumber
    	\prod_{k=1}^{n}m_2(k)\mathbf{Q}^{[2]}\left(T^{[2]}>n\right)
    	=\frac{1}{1-f_{(n)}(0)}
    \end{equation}
    and
    $$\prod_{k=1}^{j}m_2(k)\prod_{k=j+1}^{n}\left(m_1(k)\right)^2\mathbf{Q}^{[2]}\left(T^{[2]}=j\right)=\frac{\sigma^2}{1-f_{(n)}(0)}.$$
	Thus, we prove $\left(\ref{many-to-two}\right)$. 
\end{proof}

\begin{corollary}[Limit of second moment]\label{second moment}
	\begin{equation}\label{2-moment} 
		\lim_{n\to\infty}\mathbf{E}\left[\left(\frac{\hat{Z}^{(n)}(-\infty, \sqrt{n} x]}{n}\right)^2\right]=\frac{\sigma^4}{2}\int_{0}^{1}\mathbf{E}\left(\Phi_{1-t}^2\left(x-B_{t}\right)\right)dt,
	\end{equation}
    where $B_{t}$ is a Brownian motion and
    $$\Phi_{1-t}(x):=\int_{-\infty}^{x} \frac{1}{\sqrt{2 \pi(1-t)}} e^{-\frac{y^{2}}{2(1-t)}}\, dy.$$
\end{corollary}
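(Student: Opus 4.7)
The plan is to apply the many-to-two formula~(\ref{many-to-two}) with
$$F\bigl(u^1,u^2\bigr)=\mathbf{1}_{\{V(u^1)\le\sqrt{n}x\}}\mathbf{1}_{\{V(u^2)\le\sqrt{n}x\}},$$
so that $\mathbf{E}\bigl[(\hat{Z}^{(n)}(-\infty,\sqrt{n}x])^2\bigr]=\mathbf{E}\bigl[\sum_{u^1,u^2\in\hat{G}_n}F(u^1,u^2)\bigr]$. Dividing by $n^2$, the right-hand side splits into a diagonal piece coming from $F(w^1_n,w^1_n)$ and a main piece indexed by the split time $j$. Using the standard fact $n(1-f_{(n)}(0))\to 2/\sigma^2$, the diagonal piece equals $\frac{1}{n^2(1-f_{(n)}(0))}\mathbf{Q}^{[2]}(\xi^1_n\le\sqrt{n}x)$, which is $O(1/n)$ since the probability is bounded by $1$; hence it vanishes in the limit.

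For the main piece, note that under $\mathbf{Q}^{[2]}$ the two spines $w^1_{j,n}$ and $w^2_{j,n}$ follow the same path $(\xi_k)_{0\le k\le j}$ up to the split time $j$ and then evolve as two independent random walks with step law $\nu$, and by (\ref{transition of zeta}) $(\xi_k)$ is itself a $\nu$-random walk. Conditioning on $\xi_j$ therefore gives
$$\mathbf{E}_{\mathbf{Q}^{[2]}}\bigl[F(w^1_{j,n},w^2_{j,n})\bigr]=\mathbf{E}_{\mathbf{Q}^{[2]}}\!\left[\phi_{n-j}\bigl(\sqrt{n}x-\xi_j\bigr)^{2}\right],$$
where $\phi_m(y):=\mathbf{P}(S_m\le y)$ for a $\nu$-random walk $S_m$. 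Combining with the prefactor $\sigma^2/(1-f_{(n)}(0))\sim n\sigma^4/2$, the main piece becomes
$$\frac{\sigma^4}{2}\cdot\frac{1}{n}\sum_{j=1}^{n}\mathbf{E}_{\mathbf{Q}^{[2]}}\!\left[\phi_{n-j}\bigl(\sqrt{n}x-\xi_j\bigr)^{2}\right]+o(1).$$

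To take the limit, parametrise $j=\lfloor nt\rfloor$. Donsker's invariance principle yields $\xi_{\lfloor nt\rfloor}/\sqrt{n}\Rightarrow B_t$ in the Skorokhod sense, and the classical CLT gives, for each fixed $t\in[0,1)$,
$$\phi_{n-j}\bigl(\sqrt{n}x-\xi_j\bigr)=\mathbf{P}\!\left(\frac{S_{n-j}}{\sqrt{n-j}}\le\frac{\sqrt{n}x-\xi_j}{\sqrt{n-j}}\,\bigg|\,\xi_j\right)\longrightarrow\Phi\!\left(\frac{x-B_t}{\sqrt{1-t}}\right)=\Phi_{1-t}(x-B_t)$$
jointly in distribution. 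Since $\phi_{n-j}(\cdot)^2\in[0,1]$, bounded convergence upgrades this to convergence of the corresponding expectations, and Riemann sum convergence (again justified by boundedness) yields
$$\frac{1}{n}\sum_{j=1}^{n}\mathbf{E}_{\mathbf{Q}^{[2]}}\!\left[\phi_{n-j}(\sqrt{n}x-\xi_j)^{2}\right]\longrightarrow\int_{0}^{1}\mathbf{E}\bigl[\Phi_{1-t}^{2}(x-B_t)\bigr]\,dt,$$
which gives (\ref{2-moment}).

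The main technical nuisance is behaviour near $t=1$, where $\Phi_{1-t}$ degenerates to a Heaviside function and the CLT approximation breaks down. However, because the integrand is uniformly bounded by $1$, the contribution of $j\ge n(1-\varepsilon)$ to the Riemann sum is at most $\varepsilon$ for every $n$, and the matching tail $\int_{1-\varepsilon}^{1}\mathbf{E}[\Phi_{1-t}^{2}(x-B_t)]\,dt\le\varepsilon$; letting $\varepsilon\downarrow 0$ after $n\to\infty$ absorbs the problematic strip. The remaining convergence on $[0,1-\varepsilon]$ is the routine joint CLT/invariance principle argument sketched above.
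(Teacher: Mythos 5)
Your proposal is correct and follows essentially the same route as the paper: apply the many-to-two formula with the indicator test function, discard the diagonal spine term as $O(1/n)$, condition on the position at the split time to factor the joint probability into a square, use $n(1-f_{(n)}(0))\to 2/\sigma^2$ for the prefactor, and pass to the limit via the CLT/Donsker and a Riemann-sum argument with an $\varepsilon$-truncation near the endpoints. The only cosmetic difference is that you spell out the boundary strip near $t=1$ slightly more explicitly than the paper does.
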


\begin{proof}	
	Taking
	$$F\left(w^1_{n},w^2_{n}\right)=\mathbf{1}_{\left\{V\left(w^1_{n}\right)\leq\sqrt{n}x, V\left(w^2_{n}\right)\leq\sqrt{n}x\right\}}=\mathbf{1}_{\left\{\xi^1_{n}\leq\sqrt{n}x, \xi^2_{n}\leq\sqrt{n}x\right\}},$$
	then $$F\left(u^1,u^2\right)=\mathbf{1}_{\left\{V(u^1)\leq\sqrt{n}x,V(u^2)\leq\sqrt{n}x\right\}}.$$ 
	It follows from $\left(\ref{many-to-two}\right)$ that
	\begin{equation}\nonumber
		\begin{aligned}
			\mathbf{E}\left[\left(\frac{\hat{Z}^{(n)}(-\infty, \sqrt{n} x]}{n}\right)^2\right]=&\frac{1}{n^2}\mathbf{E}\left[\sum_{u^1,u^2\in\hat{G}_{n}}\mathbf{1}_{\left\{V(u^1)\leq\sqrt{n}x,V(u^2)\leq\sqrt{n}x\right\}}\right]\\
			=&\frac{1}{n^2\left(1-f_{(n)}(0)\right)}\mathbf{Q}^{[2]}\left(\xi_{n}^1\leq\sqrt{n}x\right)\\
			&+\frac{\sigma^2}{n^2\left(1-f_{(n)}(0)\right)}\sum_{j=1}^{n}\mathbf{Q}^{[2]}\left(\xi^1_{j,n}\leq\sqrt{n}x,\xi^2_{j,n}\leq\sqrt{n}x\right),
		\end{aligned}
	\end{equation}
    where $\xi^1_{j,n}$ and $\xi^2_{j,n}$ are two random walks having the same trajectory before time $j$ and splitting at time $j$. For the first term in last equation, taking limit as $n\to\infty$, we have
	$$\lim_{n\to\infty}\frac{1}{n^2\left(1-f_{(n)}(0)\right)}\mathbf{Q}^{[2]}\left(\xi_{n}^1\leq\sqrt{n}x\right)=0.$$
	Taking limit for the second term, we get
	\begin{equation}\label{limit of second moment}
		\begin{aligned}
			&\lim_{n\to\infty}\frac{\sigma^2}{n^2\left(1-f_{(n)}(0)\right)}\sum_{j=1}^{n}\mathbf{Q}^{[2]}\left(\xi^1_{j,n}\leq\sqrt{n}x,\xi^2_{j,n}\leq\sqrt{n}x\right)\\
			=&\lim_{n\to\infty}\frac{\sigma^2}{n^2\left(1-f_{(n)}(0)\right)}\sum_{j=1}^{n}\mathbf{Q}^{[2]}\left[\mathbf{Q}^{[2]}\left.\left(\xi^1_{j}+\tilde{\xi}_{n-j}^1\leq\sqrt{n}x,\xi^1_{j}+\tilde{\xi}_{n-j}^2\leq\sqrt{n}x~\right| \sigma\left(\xi^1_{j}\right)\right)\right]\\
			=&\lim_{n\to\infty}\frac{\sigma^2}{n^2\left(1-f_{(n)}(0)\right)}\sum_{j=1}^{n}\mathbf{Q}^{[2]}\left[\left(\mathbf{Q}^{[2]}\left.\left(\xi^1_{j}+\tilde{\xi}_{n-j}^1\leq\sqrt{n}x~\right| \sigma\left(\xi^1_{j}\right)\right)\right)^2\right]\\
			=&\lim_{n\to\infty}\frac{\sigma^2}{n\left(1-f_{(n)}(0)\right)}\frac{1}{n}\sum_{j=1}^{n}\mathbf{Q}^{[2]}\left[\left(\mathbf{Q}^{[2]}\left.\left(\frac{\xi^1_{j}}{\sqrt{n}}+\frac{\tilde{\xi}_{n-j}^1}{\sqrt{n}}\leq x ~\right| \sigma\left(\xi^1_{j}\right)\right)\right)^2\right]\\
			=&\frac{\sigma^4}{2}\int_{0}^{1}\mathbf{E}\left(\Phi_{1-t}^2\left(x-B_{t}\right)\right)dt,
		\end{aligned}
	\end{equation}
    where $\tilde{\xi}_{n-j}^1$ and $\tilde{\xi}_{n-j}^2$ are independent and identically distributed, and they are independent with $\xi^1_{j}$. The last equality follows by the central limit theorem. Indeed, for sufficiently small $\epsilon>0$, using the central limit theorem, we have that the sum
    $$\frac{1}{n}\sum_{j=1}^{n}\mathbf{Q}^{[2]}\left[\left(\mathbf{Q}^{[2]}\left.\left(\frac{\xi^1_{j}}{\sqrt{n}}+\frac{\tilde{\xi}_{n-j}^1}{\sqrt{n}}\leq x ~\right| \sigma\left(\xi^1_{j}\right)\right)\right)^2\right]$$
    is majorized for $n$ sufficiently large by the expression
    $$\frac{1}{n}\sum_{j=[\epsilon n]}^{[(1-\epsilon)n]}\mathbf{E}\left[\Phi^2\left(\left(x-\frac{\xi^1_{j}}{\sqrt{n}}\right)\left(1-\frac{j}{n}\right)^{-\frac{1}{2}}\right)\right],$$
    and this sum is the Riemmann approximating sum for the integral
    $$\int_{\epsilon}^{1-\epsilon}\mathbf{E}\left[\Phi^2\left(\left(x-B_t\right)\left(1-t\right)^{-\frac{1}{2}}\right)\right]dt=\int_{\epsilon}^{1-\epsilon}\mathbf{E}\left[\Phi^2_{1-t}\left(x-B_t\right)\right]dt.$$
    Taking $\epsilon\to0$, the last equality in display (\ref{limit of second moment}) holds.
    Thus, $\left(\ref{2-moment}\right)$ follows.
\end{proof}

Now, we state the general many-to-few formula (Lemma 8 of \cite{HR17}) in our setting for computing higher-order moments.
\begin{lemma}[many-to-few formula]
	For any $n\in\mathbb{N}$, $r\geq3$, and any $\mathcal{F}^{[r]}_{n}$ measurable function $F$, we have 
	\begin{equation}\label{many-to-few}
		\mathbf{E}\left[\sum_{u^1,\cdots, u^r \in \hat{G}_{n}} F\left(u^1,\cdots,u^r\right)\right]=\mathbf{E}_{\mathbf{Q}^{[r]}}\left[F\left(w_{n}^1,\cdots, w_{n}^r\right)\prod_{w\in skel^{(r)}(n)\backslash \left\{\varnothing\right\}}m_{D\left(\overleftarrow{w}\right)}\left(|w|\right)\right],
	\end{equation}
	where $skel^{(r)}(n)$ is the set of all particles up to time $n$ that have carried at least one mark and $D\left(w\right)$ is the number of marks carried by particle $w$.
\end{lemma}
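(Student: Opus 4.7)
The plan is to adapt the proof of Lemma~8 in Harris--Roberts \cite{HR17} to the time-inhomogeneous reproduction mechanism of the conditioned reduced BRW, exactly as the authors already suggest for the many-to-two case. The only modification needed is to replace every occurrence of the homogeneous offspring moment $m_{j}$ in their argument by the generation-dependent moment $m_{j}(k)$ defined through (\ref{r-derivatives}). Since displacements are sampled from $\nu$ independently of reproduction under both $\mathbf{P}$ and $\mathbf{Q}^{[r]}$, the spatial factors inside $F$ pass through the change of measure unchanged, and only the reproductive book-keeping has to be revisited.

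Concretely, I would first introduce an auxiliary probability $\tilde{\mathbf{P}}^{[r]}$ on spined trees under which (i) the tree is distributed as the conditioned reduced BRW under $\mathbf{P}$, and (ii) given the tree, the $r$ marks descend independently, each mark at each generation picking one of the children of its current particle uniformly at random. Since for any target tuple $(u^{1},\ldots,u^{r})\in\hat{G}_{n}^{r}$ the conditional $\tilde{\mathbf{P}}^{[r]}$-probability that the $r$ spines land exactly at that tuple equals $\prod_{w}N_{w}^{-D(w)}$, the tautology
\begin{equation*}
	\mathbf{E}\!\left[\sum_{u^{1},\ldots,u^{r}\in\hat{G}_{n}}F(u^{1},\ldots,u^{r})\right]
	=\mathbf{E}_{\tilde{\mathbf{P}}^{[r]}}\!\left[F(w^{1}_{n},\ldots,w^{r}_{n})\prod_{w\in skel^{(r)}(n),\,|w|<n}N_{w}^{D(w)}\right]
\end{equation*}
holds, the product running over skeleton parents. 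Comparing $\mathbf{Q}^{[r]}$ with $\tilde{\mathbf{P}}^{[r]}$ one generation at a time, a skeleton parent $w$ carrying $D(w)=j$ marks reproduces under $\mathbf{Q}^{[r]}$ according to the size-biased law $l^{j}p_{l}(|w|+1,n)/m_{j}(|w|+1)$ instead of $p_{l}(|w|+1,n)$, while the uniform mark-to-child assignment and the evolution of unmarked particles coincide under the two measures. The resulting Radon--Nikodym density on $\mathcal{F}^{[r]}_{n}$ is a skeleton product of factors $l^{j}/m_{j}(|w|+1)=N_{w}^{D(w)}/m_{D(w)}(|w|+1)$, whose numerators cancel the combinatorial weight above and leave precisely the product of the moments $m_{D(\overleftarrow{v})}(|v|)$ on the right-hand side of (\ref{many-to-few}).

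I expect the main obstacle to be combinatorial book-keeping rather than any serious analytic difficulty: one has to track, for each generation and each marked particle, how many marks it is carrying, and verify that the accumulated $m_{j}(k)$ factors line up exactly with the index set $skel^{(r)}(n)\setminus\{\varnothing\}$ appearing in the statement. The already-established many-to-one and many-to-two formulas serve as sanity checks, and the argument can be formalised either by induction on $n$, applying the one-step change of measure at each generation and re-arranging the growing skeleton product, or by partitioning according to the skeleton topology and integrating out the successive split times as was done with $T^{[2]}$ in the proof of (\ref{many-to-two}).
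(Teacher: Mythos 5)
Your proposal is correct and follows essentially the same route as the paper, which gives no independent proof of this lemma but simply invokes Lemma 8 of Harris--Roberts with the homogeneous offspring moments replaced by the generation-dependent $m_j(k)$, exactly as you propose; your auxiliary measure $\tilde{\mathbf{P}}^{[r]}$ with uniformly descending marks and the cancellation of $N_w^{D(w)}$ against the size-biasing factors is precisely the mechanism behind that lemma. One small point worth checking in your final write-up: your argument naturally produces the weight $\prod_{w\in skel^{(r)}(n),\,|w|<n}m_{D(w)}(|w|+1)$ indexed by skeleton \emph{parents}, which is the form the paper actually uses in its moment computations (e.g.\ $\prod_{k=1}^{j}m_2(k)\prod_{k=j+1}^{n}\left(m_1(k)\right)^2$ in the many-to-two proof), whereas the displayed product $\prod_{w\in skel^{(r)}(n)\setminus\{\varnothing\}}m_{D(\overleftarrow{w})}(|w|)$ coincides with it only if read per parent rather than per child, since at a split generation several skeleton children share the same parent and a literal reading would duplicate that factor.
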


\begin{corollary}[Limit of $r$-th moment]\label{r-th moment}
		For any $r\geq3$, let $$\mu_{r}(x):=\lim_{n\to\infty}\mathbf{E}\left[\left(\frac{\hat{Z}^{(n)}(-\infty, \sqrt{n} x]}{n}\right)^r\right],$$ then $\mu_{r}(x)$ satisfies the following recursive formula:
	\begin{equation}\label{r-moment}
		\mu_{r}(x)=\sum_{i=1}^{r-1}\binom{r}{i}\int_{0}^{1}\mathbf{E}\left[\mu_{i}^t\left(x-B_t\right)\mu_{r-i}^t\left(x-B_t\right)\right]dt,
	\end{equation}
	where $\mu_{1}^t\left(x\right):=\frac{\sigma^2}{2}\Phi_{1-t}(x)$, $\mu_{2}^t\left(x\right):=\frac{\sigma^4}{2}\int_{t}^{1}\mathbf{E}\left[\Phi_{1-s}^2\left(x-B_{s}\right)\right]ds$ and for $i\geq3$, $\mu_{i}^t$ is defined by recursion, i.e.,
	$$\mu_{i}^t(x):=\sum_{j=1}^{i-1}\binom{i}{j}\int_{t}^{1}\mathbf{E}\left[\mu_{j}^s\left(x-B_s\right)\mu_{i-j}^s\left(x-B_s\right)\right]ds.$$
	
	%$B^1,\cdots,B^{r-1}$ are $r-1$ standard Brownian motions starting from zero such that $B^1_{s}=B^2_{s}=\cdots=B^{r-1}_{s}$ for $s\leq t_1$ and $\left\{B^1_{s+t_1}-B^1_{t_1}\right\}_{s\geq0}$ and $\left\{B^i_{s+t_1}-B^i_{t_1}\right\}_{s\geq0}$ $(2\leq i\leq r-1)$ are independent, $B^2_{s}=B^3_{s}=\cdots=B^{r-1}_{s}$ for $s\leq t_2$ and $\left\{B^2_{s+t_2}-B^2_{t_2}\right\}_{s\geq0}$ and $\left\{B^i_{s+t_2}-B^i_{t_2}\right\}_{s\geq0}$ $(3\leq i\leq r-1)$ are independent, $\cdots$, $B^{r-2}_{s}=B^{r-1}_{s}$ for $s\leq t_{r-2}$ and $\left\{B^{r-2}_{s+t_{r-2}}-B^{r-2}_{t_{r-2}}\right\}_{s\geq0}$ and $\left\{B^{r-1}_{s+t_{r-2}}-B^{r-1}_{t_{r-2}}\right\}_{s\geq0}$ are independent.
\end{corollary}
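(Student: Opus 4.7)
I would proceed by induction on $r$, with the base cases $r=1$ and $r=2$ already established in Corollaries \ref{first moment} and \ref{second moment}. For the induction step, I apply the many-to-few formula (\ref{many-to-few}) to the test function $F(u^1,\ldots,u^r) = \prod_{i=1}^r \mathbf{1}_{\{V(u^i) \le \sqrt{n}x\}}$, which yields
\[
\mathbf{E}\left[\left(\frac{\hat{Z}^{(n)}(-\infty, \sqrt{n}x]}{n}\right)^r\right] = \frac{1}{n^r}\mathbf{E}_{\mathbf{Q}^{[r]}}\left[\prod_{i=1}^r \mathbf{1}_{\{\xi_n^i \le \sqrt{n}x\}} \prod_{w \in skel^{(r)}(n)\backslash\{\varnothing\}} m_{D(\overleftarrow{w})}(|w|)\right].
\]

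I then decompose over the first split time $T^{[r]}$ of the $r$ marks and the resulting partition of those marks among sibling particles. Before $T^{[r]}$ the unique mark-carrying particle evolves as a random walk with step law $\nu$ (cf.~(\ref{transition of zeta})); at $T^{[r]}$ it branches and the marks are placed independently and uniformly among its children. In analogy with the many-to-two derivation around (\ref{tail law of T2}) and (\ref{law of T2}), the joint distribution of $T^{[r]}$ together with the partition of marks is computable in closed form from the size-biased offspring laws $\{l^j p_l(k,n)/m_j(k)\}$ and the uniform parent assignment. Using the asymptotic $1 - f_{(n-k+1)}(0) \sim 2/(\sigma^2(n-k+1))$, first splits of the marks into $\kappa \ge 3$ non-empty groups at a single generation contribute an extra factor of order $n^{-(\kappa-2)}$ after scaling and so vanish in the limit; only binary first splits $(i, r-i)$ with $1 \le i \le r-1$ survive.

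After such a binary split at generation $j$, the branching property of $\mathbf{Q}^{[r]}$ together with the construction of $\mathbf{Q}^{[i]}$ and $\mathbf{Q}^{[r-i]}$ shows that the two sub-trees evolve as independent conditioned reduced BRWs carrying $i$ and $r-i$ spines respectively, each starting from the common split position $\xi_j^1$ and running for the remaining $n-j$ generations. Applying the induction hypothesis to each sub-tree (with $n-j$ replacing $n$) and rescaling $j/n \to t$, $\xi_j^1/\sqrt{n} \to B_t$, the functions $\mu_i^t(x-B_t)$ and $\mu_{r-i}^t(x-B_t)$ emerge as the time-$t$ analogues of the limiting $i$-th and $(r-i)$-th moments on the remaining interval $[t,1]$, matching precisely the recursive definition of $\mu_i^t$ stated in the corollary. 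The combinatorial coefficient $\binom{r}{i}$ counts the number of ways to assign the $r$ labeled marks to an ordered pair of spine groups of sizes $i$ and $r-i$.

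The scaling limit is then assembled as in the proof of Corollary \ref{second moment}. The skeleton moment product $\prod_{w} m_{D(\overleftarrow{w})}(|w|)$ factorizes across the first split into a pre-split segment, a split factor at generation $j$, and two independent post-split sub-skeleton products; combined with the probability of the split pattern this yields an overall factor $\sigma^2/(1-f_{(n)}(0))$ per binary split, exactly the mechanism visible in the calculation preceding (\ref{many-to-two}). Together with $n(1-f_{(n)}(0)) \to 2/\sigma^2$, these prefactors are absorbed into the $\sigma^{2i}/2^i$ and $\sigma^{2(r-i)}/2^{r-i}$ scalings built into $\mu_i^t$ and $\mu_{r-i}^t$. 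A Riemann-sum approximation $n^{-1}\sum_{j=1}^n \to \int_0^1 \cdot\, dt$, combined with Donsker's invariance principle applied to the spine random walk, converts the discrete sum over $j$ into the integral appearing in (\ref{r-moment}). The main obstacle is the combinatorial and asymptotic bookkeeping: one must rigorously verify that all multi-way concurrent splits are asymptotically negligible, and that the full skeleton moment product recurses cleanly through every binary decomposition so that the induction hypothesis applied to the two sub-trees reproduces exactly the coefficient $\binom{r}{i}$ and the correct scaling embedded in $\mu_i^t \mu_{r-i}^t$.
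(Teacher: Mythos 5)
Your proposal follows essentially the same route as the paper: apply the many-to-few formula to the product of indicators, decompose over the first split time and the partition of marks, show that splits into three or more groups are killed by extra factors of $1/n$, and convert the surviving binary-split sum into the integral via a Riemann sum and the invariance principle, with the two post-split subtrees handled recursively through the time-shifted moments $\mu_i^t$. The only point to keep in mind when writing it out is the bookkeeping the paper makes explicit, namely the factor $\tfrac{1}{2}$ compensating the overcounting of the unordered pair $\{i,r-i\}$ in $\sum_{i=1}^{r-1}\binom{r}{i}$ and the fact that the induction must be carried at the level of the time-inhomogeneous quantities $\mu_i^t$ rather than $\mu_i$ itself.
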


\begin{proof}
	Let
	$$F\left(w^1_{n},\cdots,w^r_{n}\right)=\mathbf{1}_{\left\{V\left(w^1_{n}\right)\leq\sqrt{n}x,\cdots, V\left(w^r_{n}\right)\leq\sqrt{n}x\right\}}=\mathbf{1}_{\left\{\xi^1_{n}\leq\sqrt{n}x,\cdots, \xi^r_{n}\leq\sqrt{n}x\right\}},$$
	then $$F\left(u^1,\cdots,u^r\right)=\mathbf{1}_{\left\{V(u^1)\leq\sqrt{n}x,\cdots,V(u^r)\leq\sqrt{n}x\right\}}.$$ 
	It follows from $\left(\ref{many-to-few}\right)$ that
	\begin{equation}\label{many-to-few 2}
		\begin{aligned}
			\mathbf{E}\left[\left(\frac{\hat{Z}^{(n)}(-\infty, \sqrt{n} x]}{n}\right)^r\right]=&\frac{1}{n^r}\mathbf{E}\left[\sum_{u^1,\cdots,u^r\in\hat{G}_{n}}\mathbf{1}_{\left\{V(u^1)\leq\sqrt{n}x,\cdots,V(u^r)\leq\sqrt{n}x\right\}}\right]\\
			=&\frac{1}{n^r}\mathbf{E}_{\mathbf{Q}^{[r]}}\left[\mathbf{1}_{\left\{\xi^1_{n}\leq\sqrt{n}x,\cdots, \xi^r_{n}\leq\sqrt{n}x\right\}}\prod_{w\in skel^{(r)}(n)\backslash \left\{\varnothing\right\}}m_{D\left(\overleftarrow{w}\right)}\left(|w|\right)\right].
		\end{aligned}
	\end{equation}
	To compute the right side of $\left(\ref{many-to-few 2}\right)$, we should express the product in the expectation according to the marks the particles carried. We denote by $T^{[r]}$ the split time at which $r$ marks are carried by different children. Let $\left\{T^{[r]}>n\right\}$ denote the event that all $r$ marks are carried by the same particle up to time $n$, that is $r$ marks do not split, and $\left\{T^{[r]}\leq n\right\}$ denote the event that the $r$ marks have been carried by different particles at some times $0\leq j_1,\cdots,j_i\leq n$. For the latter case, more specifically, we let $T^{[r]}_1\left(1,r-1\right)$ denote the first split time at which one mark and the rest $r-1$ marks are carried by two different particles, $T^{[r]}_1\left(1,1,r-2\right)$ denote the first split time at which two of the $r$ marks are carried by two different children and the rest $r-2$ marks are carried by a different particle, $T^{[r]}_2\left(3,\left(1,r-4\right)\right)$ denote the second split time at which one of the $r-3$ marks and the rest $r-4$ marks are carried by two different particles. More generally, for $1\leq i\leq r-1$, we denote by $T^{[r]}_{i}\left(r_{1},\left(r_{2}\cdots,\left(r_{i},r-r_{1}-\cdots-r_{i}\right)\cdots\right)\right)$ the $i$-th split time at which $r_{i}$ of the $r-r_{1}-\cdots-r_{i-1}$ marks and the remaining $r-r_{1}-\cdots-r_{i}$ marks are carried by two different children.
	
	\begin{itemize}
		\item On the event that all $r$ marks do not split up to time $n$, we have
		$$
		\begin{aligned}
			&\frac{1}{n^r}\mathbf{E}_{\mathbf{Q}^{[r]}}\left[\mathbf{1}_{\left\{\xi^1_{n}\leq\sqrt{n}x,\cdots, \xi^r_{n}\leq\sqrt{n}x\right\}}\prod_{w\in skel^{(r)}(n)\backslash \left\{\varnothing\right\}}m_{D\left(\overleftarrow{w}\right)}\left(|w|\right)\mathbf{1}_{\left\{T^{[r]}>n\right\}}\right]\\
			=&\frac{1}{n^r}\mathbf{E}_{\mathbf{Q}^{[r]}}\left[\mathbf{1}_{\left\{\xi^1_{n}\leq\sqrt{n}x\right\}}\prod_{k=1}^{n}m_{r}\left(k\right)\right]\mathbf{Q}^{[r]}\left(T^{[r]}>n\right)
		\end{aligned}
		$$
		Under $\mathbf{Q}^{[r]}$, the particle which carries $r$ marks branches at time $j$ into $l$ particles with probability $l^r p_l(j,n)/m_r(j)$. At such a branching event, the $r$ marks follow the same particle with probability $1/l^{r-1}$. Thus, we have
		$$\mathbf{Q}^{[r]}\left(T^{[r]}>n\right)=\prod_{k=1}^{n}\frac{m_1(k)}{m_r(k)}.$$
		It follows that
		$$
		\begin{aligned}
			&\frac{1}{n^r}\mathbf{E}_{\mathbf{Q}^{[r]}}\left[\mathbf{1}_{\left\{\xi^1_{n}\leq\sqrt{n}x,\cdots, \xi^r_{n}\leq\sqrt{n}x\right\}}\prod_{w\in skel^{(r)}(n)\backslash \left\{\varnothing\right\}}m_{D\left(\overleftarrow{w}\right)}\left(|w|\right)\mathbf{1}_{\left\{T^{[r]}>n\right\}}\right]\\
			=&\frac{1}{n^r}\mathbf{Q}^{[r]}\left(\xi^1_{n}\leq\sqrt{n}x\right)\prod_{k=1}^{n}m_1(k)\\
			=&\frac{1}{n^r}\mathbf{Q}^{[r]}\left(\xi^1_{n}\leq\sqrt{n}x\right)\prod_{k=1}^{n}\frac{1-f_{(n-k)}(0)}{1-f_{(n-k+1)}(0)}\\
			=&\frac{1}{n^r}\mathbf{Q}^{[r]}\left(\xi^1_{n}\leq\sqrt{n}x\right)\frac{1}{1-f_{(n)}(0)},
		\end{aligned}
		$$
		Since $r\geq3$, we obtain, by taking limit as $n\to\infty$,
		$$\lim_{n\to\infty}\frac{1}{n^r}\mathbf{Q}^{[r]}\left(\xi^1_{n}\leq\sqrt{n}x\right)\frac{1}{1-f_{(n)}(0)}=0.$$
		The value on the right side of $\left(\ref{many-to-few 2}\right)$ converges to zero as $n\to\infty$ on the event that $r$ marks are carried by the same particle up to time $n$.
		
		\item On the event that $r$ marks are carried by $r$ different children at the first split time $T^{[r]}_1\left(1,\cdots,1\right)$, we have
		$$
		\begin{aligned}
			&\sum_{j=1}^{n}\frac{1}{n^r}\mathbf{E}_{\mathbf{Q}^{[r]}}\left[\mathbf{1}_{\left\{\xi^1_{n}\leq\sqrt{n}x,\cdots, \xi^r_{n}\leq\sqrt{n}x\right\}}\prod_{w\in skel^{(r)}(n)\backslash \left\{\varnothing\right\}}m_{D\left(\overleftarrow{w}\right)}\left(|w|\right)\mathbf{1}_{\left\{T^{[r]}_1\left(1,\cdots,1\right)=j\right\}}\right]\\
			=&\sum_{j=1}^{n}\frac{1}{n^r}\mathbf{E}_{\mathbf{Q}^{[r]}}\left[\mathbf{1}_{\left\{\xi^1_{j,n}\leq\sqrt{n}x,\cdots, \xi^r_{j,n}\leq\sqrt{n}x\right\}}\prod_{k=1}^{j}m_{r}\left(k\right)\prod_{k=j+1}^{n}\left(m_1(k)\right)^r\right]\\
			&\times\mathbf{Q}^{[r]}\left(T^{[r]}_1\left(1,\cdots,1\right)=j\right),
		\end{aligned}
		$$
		where $\xi^1_{j,n},\cdots,\xi^r_{j,n}$ are $r$ random walks having the same trajectory before time $j$ and splitting into $r$ different trajectories at time $j$. Under $\mathbf{Q}^{[r]}$, the particle which carries $r$ marks branches at time $j$ into $l$ particles with probability $l^r p_l(j,n)/m_r(j)$. At such a branching event, the $r$ marks follow $r$ different particles with probability $\frac{l\cdots (l-r+1)}{l^r}$. Thus, we have
		$$\mathbf{Q}^{[r]}\left(T^{[r]}_1\left(1,\cdots,1\right)=j\right)=\prod_{k=1}^{j-1}\frac{m_1(k)}{m_r(k)}\frac{\hat{f}_{j,n}^{(r)}(1)}{m_r(j)}.$$
		Then,
		$$
		\begin{aligned}
			&\sum_{j=1}^{n}\frac{1}{n^r}\mathbf{E}_{\mathbf{Q}^{[r]}}\left[\mathbf{1}_{\left\{\xi^1_{n}\leq\sqrt{n}x,\cdots, \xi^r_{n}\leq\sqrt{n}x\right\}}\prod_{w\in skel^{(r)}(n)\backslash \left\{\varnothing\right\}}m_{D\left(\overleftarrow{w}\right)}\left(|w|\right)\mathbf{1}_{\left\{T^{[r]}_1\left(1,\cdots,1\right)=j\right\}}\right]\\
			=&\sum_{j=1}^{n}\frac{1}{n^r}\mathbf{Q}^{[r]}\left(\xi^1_{j,n}\leq\sqrt{n}x,\cdots, \xi^r_{j,n}\leq\sqrt{n}x\right)\prod_{k=j+1}^{n}\left(m_1(k)\right)^r\prod_{k=1}^{j-1}m_1(k)\hat{f}_{j,n}^{(r)}(1)\\
			=&\sum_{j=1}^{n}\frac{1}{n^r}\mathbf{Q}^{[r]}\left(\xi^1_{j,n}\leq\sqrt{n}x,\cdots, \xi^r_{j,n}\leq\sqrt{n}x\right)\frac{f^{(r)}(1)}{1-f_{(n)}(0)},
		\end{aligned}
		$$
		where the last equality holds from Lemma \ref{r-derivatives} and $\left(\ref{m1}\right)$. Taking limit as $n\to\infty$, we get
		$$\lim_{n\to\infty}\sum_{j=1}^{n}\frac{1}{n^r}\mathbf{Q}^{[r]}\left(\xi^1_{j,n}\leq\sqrt{n}x,\cdots,\xi^r_{j,n}\leq\sqrt{n}x\right)\frac{f^{(r)}(1)}{1-f_{(n)}(0)}=0.$$
		That is the term on the right side of $\left(\ref{many-to-few 2}\right)$ vanishes as $n\to\infty$ on the event that $r$ marks are carried by $r$ different children at the first split time $T^{[r]}_1\left(1,\cdots,1\right)$.
		
		\item On the event that $r$ marks are carried by $r-1$ different children at the first split time $T^{[r]}_1\left(1,\cdots,1,2\right)$, we have
		$$
		\begin{aligned}
			&\sum_{j_1=1}^{n}\frac{1}{n^r}\mathbf{E}_{\mathbf{Q}^{[r]}}\left[\mathbf{1}_{\left\{\xi^1_{n}\leq\sqrt{n}x,\cdots, \xi^r_{n}\leq\sqrt{n}x\right\}}\prod_{w\in skel^{(r)}(n)\backslash \left\{\varnothing\right\}}m_{D\left(\overleftarrow{w}\right)}\left(|w|\right)\mathbf{1}_{\left\{T^{[r]}_1\left(1,\cdots,1,2\right)=j_1\right\}}\right]\\
			=&\sum_{j_1=1}^{n}\sum_{j_2=j_1+1}^{n}\frac{1}{n^r}\mathbf{Q}^{[r]}\left(\xi^1_{j_1,n}\leq\sqrt{n}x,\cdots,\xi^{r-2}_{j_1,n}\leq\sqrt{n}x,\xi^{r-1}_{j_1,j_2,n}\leq\sqrt{n}x,\xi^{r}_{j_1,j_2,n}\leq\sqrt{n}x\right)\\
			&~~~~\times\prod_{k=1}^{j_1}m_{r}\left(k\right)\prod_{k=j_1+1}^{n}\left(m_1(k)\right)^{r-2}\prod_{k=j_1+1}^{j_2}m_2(k)\prod_{k=j_2+1}^{n}\left(m_1(k)\right)^2\\
			&~~~~\times\mathbf{Q}^{[r]}\left(T^{[r]}_1\left(1,\cdots,1,2\right)=j_1,T^{[r]}_2\left(1,\cdots,1,\left(1,1\right)\right)=j_2\right)\\
			&+\sum_{j_1=1}^{n}\frac{1}{n^r}\mathbf{Q}^{[r]}\left(\xi^1_{j_1,n}\leq\sqrt{n}x,\cdots, \xi^{r-1}_{j_1,n}\leq\sqrt{n}x\right)\\
			&~~~~\times\prod_{k=1}^{j_1}m_{r}\left(k\right)\prod_{k=j_1+1}^{n}\left(m_1(k)\right)^{r-2}\prod_{k=j_1+1}^{n}m_2(k)\\
			&~~~~\times\mathbf{Q}^{[r]}\left(T^{[r]}_1\left(1,\cdots,1,2\right)=j_1,T^{[r]}_2\left(1,\cdots,1,\left(1,1\right)\right)>n\right),
		\end{aligned}
		$$
		where $\xi^{r-1}_{j_1,j_2,n},\xi^{r}_{j_1,j_2,n}$ are two random walks having the same trajectory as $\xi^1_{j_1,n}$ before time $j_1$ and splitting into two different trajectories at time $j_2$. Under $\mathbf{Q}^{[r]}$, the particle which carries $r$ marks branches at time $j_1$ into $l$ particles with probability $l^r p_l(j_1,n)/m_r(j_1)$. At such a branching event, the $r$ marks follow $r-1$ different particles with probability $\binom{r}{2}\frac{l\cdots (l-r+2)}{l^r}$. As a result, we can get
		$$
		\begin{aligned}
			&\mathbf{Q}^{[r]}\left(T^{[r]}_1\left(1,\cdots,1,2\right)=j_1,T^{[r]}_2\left(1,\cdots,1,\left(1,1\right)\right)=j_2\right)\\
			=&\prod_{k=1}^{j_1-1}\frac{m_1(k)}{m_r(k)}\frac{\binom{r}{2}\hat{f}_{j_1,n}^{(r-1)}(1)}{m_r(j_1)}\prod_{k=j_1+1}^{j_2-1}\frac{m_1(k)}{m_2(k)}\frac{\hat{f}_{j_2,n}^{(2)}(1)}{m_2(j_2)},
		\end{aligned}
		$$
		and
		$$
		\begin{aligned}
			&\mathbf{Q}^{[r]}\left(T^{[r]}_1\left(1,\cdots,1,2\right)=j_1,T^{[r]}_2\left(1,\cdots,1,\left(1,1\right)\right)>n\right)\\
			=&\prod_{k=1}^{j_1-1}\frac{m_1(k)}{m_r(k)}\frac{\binom{r}{2}\hat{f}_{j_1,n}^{(r-1)}(1)}{m_r(j_1)}\prod_{k=j_1+1}^{n}\frac{m_1(k)}{m_2(k)}.
		\end{aligned}
		$$
		Then,
		\begin{equation}\label{r carried by r-1}
			\begin{aligned}
				&\sum_{j_1=1}^{n}\frac{1}{n^r}\mathbf{E}_{\mathbf{Q}^{[r]}}\left[\mathbf{1}_{\left\{\xi^1_{n}\leq\sqrt{n}x,\cdots, \xi^r_{n}\leq\sqrt{n}x\right\}}\prod_{w\in skel^{(r)}(n)\backslash \left\{\varnothing\right\}}m_{D\left(\overleftarrow{w}\right)}\left(|w|\right)\mathbf{1}_{\left\{T^{[r]}_1\left(1,\cdots,1,2\right)=j_1\right\}}\right]\\
				=&\sum_{j_1=1}^{n}\sum_{j_2=j_1+1}^{n}\frac{1}{n^r}\mathbf{Q}^{[r]}\left(\xi^1_{j_1,n}\leq\sqrt{n}x,\cdots,\xi^{r-2}_{j_1,n}\leq\sqrt{n}x,\xi^{r-1}_{j_1,j_2,n}\leq\sqrt{n}x,\xi^{r}_{j_1,j_2,n}\leq\sqrt{n}x\right)\\
				&~~~~\times\frac{\binom{r}{2}f^{(r-1)}(1)\sigma^2}{1-f_{(n)}(0)}+\sum_{j_1=1}^{n}\frac{1}{n^r}\mathbf{Q}^{[r]}\left(\xi^1_{j_1,n}\leq\sqrt{n}x,\cdots, \xi^{r-1}_{j_1,n}\leq\sqrt{n}x\right)\frac{\binom{r}{2}f^{(r-1)}(1)}{1-f_{(n)}(0)},
			\end{aligned}
		\end{equation}
		where the last equality holds from Lemma \ref{r-derivatives} and $\left(\ref{m1}\right)$.
		\begin{itemize}
			\item When $r=3$, taking limit as $n\to\infty$, we get
			$$\lim_{n\to\infty}\sum_{j_1=1}^{n}\frac{1}{n^r}\mathbf{Q}^{[r]}\left(\xi^1_{j_1,n}\leq\sqrt{n}x,\cdots, \xi^{r-1}_{j_1,n}\leq\sqrt{n}x\right)\frac{\binom{r}{2}f^{(r-1)}(1)}{1-f_{(n)}(0)}=0.$$
			and
			$$
			\begin{aligned}
				&\lim_{n\to\infty}\sum_{j_1=1}^{n}\sum_{j_2=j_1+1}^{n}\frac{\binom{r}{2}f^{(r-1)}(1)\sigma^2}{n^r\left(1-f_{(n)}(0)\right)}\\
				&~~~~\times\mathbf{Q}^{[r]}\left(\xi^1_{j_1,n}\leq\sqrt{n}x,\cdots,\xi^{r-2}_{j_1,n}\leq\sqrt{n}x,\xi^{r-1}_{j_1,j_2,n}\leq\sqrt{n}x,\xi^{r}_{j_1,j_2,n}\leq\sqrt{n}x\right)\\
				=&\frac{3\sigma^6}{2}\lim_{n\to\infty}\sum_{j_1=1}^{n}\sum_{j_2=j_1+1}^{n}\frac{1}{n^2}\\
				&~~~~\times\mathbf{Q}^{[3]}\left(\xi^1_{j_1,n}\leq\sqrt{n}x,\xi^{2}_{j_1,j_2,n}\leq\sqrt{n}x,\xi^{3}_{j_1,j_2,n}\leq\sqrt{n}x\right)\\
				=&\frac{3\sigma^6}{2}\lim_{n\to\infty}\sum_{j_1=1}^{n}\sum_{j_2=j_1+1}^{n}\frac{1}{n^2}\\
				&~~~~\times\mathbf{Q}^{[3]}\left[\mathbf{Q}^{[3]}_{\xi^1_{j_1,j_1}}\left(\xi^1_{j_1,n}\leq\sqrt{n}x\right)\mathbf{Q}^{[3]}_{\xi^1_{j_1,j_1}}\left(\xi^{2}_{j_1,j_2,n}\leq\sqrt{n}x,\xi^{3}_{j_1,j_2,n}\leq\sqrt{n}x\right)\right]\\
				=&\frac{3\sigma^6}{2}\int_{0}^{1}\mathbf{E}\left[\Phi_{1-t_1}\left(x-B^1_{t_1}\right)\int_{t_1}^{1}\mathbf{E}_{B^1_{t_1}}\left[\Phi_{1-t_2}^2\left(x-\left(B^2_{t_2}-B^2_{t_1}\right)-B^1_{t_1}\right)\right]dt_2\right]dt_1,
			\end{aligned}
			$$
			where $B^1$ and $B^2$ are two Brownian motions such that $B^1_{s}=B^2_{s}$ for all $s\leq t_1$ and $\left\{B^1_{s+t_1}-B^1_{t_1}\right\}_{s\geq0}$ and $\left\{B^2_{s+t_1}-B^2_{t_1}\right\}_{s\geq0}$ are independent, the last equality holds by the similar calculations as $\left(\ref{limit of second moment}\right)$.
			
			So, when $r=3$, we obtain
			$$\mu_{3}(x)=\sum_{i=1}^{2}\binom{3}{i}\int_{0}^{1}\mathbf{E}\left[\mu_{i}^t\left(x-B_t\right)\mu_{3-i}^t\left(x-B_t\right)\right]dt.$$
			
			\item When $r\geq4$, it is easy to show that both terms on the right side of $\left(\ref{r carried by r-1}\right)$ tend to zero as $n\to\infty$.
		\end{itemize}
		
		\item Repeating the calculations, for any $r\geq4$, only the case that $r$ marks follow two different particles at each split time contributes to the limit of the right side of $\left(\ref{many-to-few 2}\right)$. Indeed, under $\mathbf{Q}^{[r]}$, the particle which carries $r$ marks branches at time $j_1$ into $l$ particles with probability $l^r p_l(j_1,n)/m_r(j_1)$. At such a branching event, the $r$ marks follow three different particles carrying $i_1$, $i_2$ and $r-i_1-i_2$ marks respectively, with probability $\binom{r}{i_1}\binom{r-i_1}{i_2}\frac{l(l-1)(l-2)}{l^r}$. Then
		$$
		\mathbf{Q}^{[r]}\left(T^{[r]}_1\left(i_1,i_2,r-i_1-i_2\right)=j_1\right)=\prod_{k=1}^{j_1-1}\frac{m_1\left(k\right)}{m_r\left(k\right)}\frac{\binom{r}{i_1}\binom{r-i_1}{i_2}\hat{f}^{(3)}_{j_1,n}(1)}{m_r\left(j_1\right)}.
		$$
		Thus, we have
		\begin{equation}\label{vanished term}
			\begin{aligned}
				&\lim_{n\to\infty}\frac{1}{n^r}\mathbf{E}_{\mathbf{Q}^{[r]}}\left[\mathbf{1}_{\left\{\xi^1_{n}\leq\sqrt{n}x,\cdots, \xi^r_{n}\leq\sqrt{n}x\right\}}\prod_{w\in skel^{(r)}(n)\backslash \left\{\varnothing\right\}}m_{D\left(\overleftarrow{w}\right)}\left(|w|\right)\mathbf{1}_{\left\{T^{[r]}_1\left(i_1,i_2,r-i_1-i_2\right)=j_1\right\}}\right]\\
				=&\lim_{n\to\infty}\frac{1}{n^r}\sum_{j_1=1}^{n}\mathbf{E}_{\mathbf{Q}^{[r]}}\bigg[\mathbf{1}_{\left\{\tilde{\xi}^1_{n-j_1}\leq\sqrt{n}x-\xi_{j_1},\cdots,\tilde{\xi}^{r}_{n-j_1}\leq\sqrt{n}x-\xi_{j_1}\right\}}\prod_{w\in skel^{(r)}(n-j_1)\backslash \left\{w_{j_1}\right\}}m_{D\left(\overleftarrow{w}\right)}\left(|w|\right)\bigg]\\
				&\times\prod_{k=1}^{j_1}m_{r}\left(k\right)\mathbf{Q}^{[r]}\left(T^{[r]}_1\left(i_1,i_2,r-i_1-i_2\right)=j_1\right)\\
				=&\lim_{n\to\infty}\frac{1}{n^2}\sum_{j_1=1}^{n}\mathbf{E}_{\mathbf{Q}^{[r]}}\bigg[\frac{1}{n^{i_1-1}}\mathbf{E}_{\mathbf{Q}^{[i_1]}}\bigg(\mathbf{1}_{\left\{\tilde{\xi}^{k_1}_{n-j_1}\leq\sqrt{n}x-\xi_{j_1},\cdots,\tilde{\xi}^{k_{i_1}}_{n-j_1}\leq\sqrt{n}x-\xi_{j_1}\right\}}\\
				&\times\prod_{w\in skel^{(i_1)}(n-j_1)\backslash \left\{w_{j_1}\right\}}m_{D\left(\overleftarrow{w}\right)}\left(|w|\right)~\bigg|~\sigma\left(w_{j_1},\xi_{j_1}\right)\bigg)\\
				&\times\frac{1}{n^{i_2-1}}\mathbf{E}_{\mathbf{Q}^{[i_2]}}\bigg(\mathbf{1}_{\left\{\tilde{\xi}^{k_{i_1+1}}_{n-j_1}\leq\sqrt{n}x-\xi_{j_1},\cdots,\tilde{\xi}^{k_{i_2}}_{n-j_1}\leq\sqrt{n}x-\xi_{j_1}\right\}}\\
				&\times\prod_{w\in skel^{(i_2)}(n-j_1)\backslash \left\{w_{j_1}\right\}}m_{D\left(\overleftarrow{w}\right)}\left(|w|\right)~\bigg|~\sigma\left(w_{j_1},\xi_{j_1}\right)\bigg)\\
				&\times\frac{1}{n^{r-i_1-i_2-1}}\mathbf{E}_{\mathbf{Q}^{[r-i_1-i_2]}}\bigg(\mathbf{1}_{\left\{\tilde{\xi}^{k_{i_2+1}}_{n-j_1}\leq\sqrt{n}x-\xi_{j_1},\cdots,\tilde{\xi}^{k_r}_{n-j_1}\leq\sqrt{n}x-\xi_{j_1}\right\}}\\
				&\times\prod_{w\in skel^{(r-i_1-i_2)}(n-j_1)\backslash \left\{w_{j_1}\right\}}m_{D\left(\overleftarrow{w}\right)}\left(|w|\right)~\bigg|~\sigma\left(w_{j_1},\xi_{j_1}\right)\bigg)\bigg]\\
				&\times\frac{1}{n}\prod_{k=1}^{j_1-1}m_1\left(k\right)\binom{r}{i_1}\binom{r-i_1}{i_2}\hat{f}^{(3)}_{j_1,n}(1),
			\end{aligned}
		\end{equation}
		where $skel^{(r)}(n-j_1)\backslash \left\{w_{j_1}\right\}$ is the set of all descendants of $w_{j_1}$ up to time $n$ that have carried at least one mark, $\tilde{\xi}^i_{n-j_1}:=\xi^i_{n}-\xi_{j_1}$ and $\{k_1,\cdots,k_r\}$ is a permutation of $\{1,\cdots,r\}$.
		
		The value on the right side of $\left(\ref{many-to-few 2}\right)$ converges to zero as $n\to\infty$ on the event that the $r$ marks follow three different particles carrying  $i_1$, $i_2$ and $r-i_1-i_2$ marks respectively, since one would obtain at least one extra $\frac{1}{n}$ term which makes (\ref{vanished term}) tend to zero. 
		
		Hence, we can only consider the case that $r$ marks follow two different particles at each split time.	At the first split time, the $r$ marks follow two different particles carrying $i$ and $r-i$ marks respectively, with probability $\binom{r}{i}\frac{l(l-1)}{l^r}$. So, we have
		$$
		\mathbf{Q}^{[r]}\left(T^{[r]}_1\left(i,r-i\right)=j_1\right)=\prod_{k=1}^{j_1-1}\frac{m_1\left(k\right)}{m_r\left(k\right)}\frac{\binom{r}{i}\hat{f}^{(2)}_{j_1,n}(1)}{m_r\left(j_1\right)}.
		$$
		Note that $\left\{T^{[r]}_1\left(i,r-i\right)=j_1\right\}$ is the same event as $\left\{T^{[r]}_1\left(r-i,i\right)=j_1\right\}$.
		It follows from the above argument that
		\begin{equation}\nonumber
			\begin{aligned}
				\mu_{r}(x)=&\lim_{n\to\infty}\frac{1}{n^r}\mathbf{E}_{\mathbf{Q}^{[r]}}\left[\mathbf{1}_{\left\{\xi^1_{n}\leq\sqrt{n}x,\cdots, \xi^r_{n}\leq\sqrt{n}x\right\}}\prod_{w\in skel^{(r)}(n)\backslash \left\{\varnothing\right\}}m_{D\left(\overleftarrow{w}\right)}\left(|w|\right)\right]\\
				=&\lim_{n\to\infty}\frac{1}{n^r}\sum_{i=1}^{r-1}\frac{1}{2}\sum_{j_1=1}^{n}\mathbf{E}_{\mathbf{Q}^{[r]}}\bigg[\mathbf{1}_{\left\{\tilde{\xi}^1_{n-j_1}\leq\sqrt{n}x-\xi_{j_1},\cdots,\tilde{\xi}^{r}_{n-j_1}\leq\sqrt{n}x-\xi_{j_1}\right\}}\\
				&\times\prod_{w\in skel^{(r)}(n-j_1)\backslash \left\{w_{j_1}\right\}}m_{D\left(\overleftarrow{w}\right)}\left(|w|\right)\bigg]\prod_{k=1}^{j_1}m_{r}\left(k\right)\mathbf{Q}^{[r]}\left(T^{[r]}_1\left(i,r-i\right)=j_1\right)\\
				=&\lim_{n\to\infty}\sum_{i=1}^{r-1}\binom{r}{i}\frac{1}{n}\sum_{j_1=1}^{n}\mathbf{E}_{\mathbf{Q}^{[r]}}\bigg[\frac{1}{n^{i-1}}\mathbf{E}_{\mathbf{Q}^{[i]}}\bigg(\mathbf{1}_{\left\{\tilde{\xi}^{k_1}_{n-j_1}\leq\sqrt{n}x-\xi_{j_1},\cdots,\tilde{\xi}^{k_i}_{n-j_1}\leq\sqrt{n}x-\xi_{j_1}\right\}}\\
				&\times\prod_{w\in skel^{(i)}(n-j_1)\backslash \left\{w_{j_1}\right\}}m_{D\left(\overleftarrow{w}\right)}\left(|w|\right)~\bigg|~\sigma\left(w_{j_1},\xi_{j_1}\right)\bigg)\\
				&\times\frac{1}{n^{r-i-1}}\mathbf{E}_{\mathbf{Q}^{[r-i]}}\bigg(\mathbf{1}_{\left\{\tilde{\xi}^{k_{i+1}}_{n-j_1}\leq\sqrt{n}x-\xi_{j_1},\cdots,\tilde{\xi}^{k_r}_{n-j_1}\leq\sqrt{n}x-\xi_{j_1}\right\}}\\
				&\times\prod_{w\in skel^{(r-i)}(n-j_1)\backslash \left\{w_{j_1}\right\}}m_{D\left(\overleftarrow{w}\right)}\left(|w|\right)~\bigg|~\sigma\left(w_{j_1},\xi_{j_1}\right)\bigg)\bigg]\frac{1}{n}\prod_{k=1}^{j_1-1}m_1\left(k\right)\frac{\hat{f}^{(2)}_{j_1,n}(1)}{2}\\
				=&\sum_{i=1}^{r-1}\binom{r}{i}\lim_{n\to\infty}\frac{1}{n}\sum_{j_1=1}^{n}\mathbf{E}_{\mathbf{Q}^{[r]}}\bigg[\frac{1}{n^{i-1}}\mathbf{E}_{\mathbf{Q}^{[i]}}\bigg(\mathbf{1}_{\left\{\tilde{\xi}^{k_1}_{n-j_1}\leq\sqrt{n}x-\xi_{j_1},\cdots,\tilde{\xi}^{k_i}_{n-j_1}\leq\sqrt{n}x-\xi_{j_1}\right\}}\\
				&\times\prod_{w\in skel^{(i)}(n-j_1)\backslash \left\{w_{j_1}\right\}}m_{D\left(\overleftarrow{w}\right)}\left(|w|\right)~\bigg|~\sigma\left(w_{j_1},\xi_{j_1}\right)\bigg)\\
				&\times\frac{1}{n^{r-i-1}}\mathbf{E}_{\mathbf{Q}^{[r-i]}}\bigg(\mathbf{1}_{\left\{\tilde{\xi}^{k_{i+1}}_{n-j_1}\leq\sqrt{n}x-\xi_{j_1},\cdots,\tilde{\xi}^{k_r}_{n-j_1}\leq\sqrt{n}x-\xi_{j_1}\right\}}\\
				&\times\prod_{w\in skel^{(r-i)}(n-j_1)\backslash \left\{w_{j_1}\right\}}m_{D\left(\overleftarrow{w}\right)}\left(|w|\right)~\bigg|~\sigma\left(w_{j_1},\xi_{j_1}\right)\bigg)\bigg]\frac{\left(1-f_{(n-j_1)}(0)\right)^2\sigma^2}{2n\left(1-f_{(n)}(0)\right)},
			\end{aligned}
		\end{equation}
	where the sum ``$\sum_{i=1}^{r-1}$" overcounting the case $i=1,\cdots,[r/2]$, we multiply by $\frac{1}{2}$ in the computation. By similar calculations as above, the term $\left(1-f_{(n-j_1)}(0)\right)^2$ can be dealt with $\prod_{w\in skel^{(i)}(n-j_1)\backslash \left\{w_{j_1}\right\}}m_{D\left(\overleftarrow{w}\right)}\left(|w|\right)$ and $\prod_{w\in skel^{(r-i)}(n-j_1)\backslash \left\{w_{j_1}\right\}}m_{D\left(\overleftarrow{w}\right)}\left(|w|\right)$, respectively, since particles in these two terms start from time $j_1$.
	    %where $skel^{(r)}(n-j_1)\backslash \left\{w_{j_1}\right\}$ is the set of all descendants of $w_{j_1}$ up to time $n$ that have carried at least one mark, $\tilde{\xi}^i_{n-j_1}:=\xi^i_{n}-\xi_{j_1}$ and $\{k_1,\cdots,k_r\}$ is a permutation of $\{1,\cdots,r\}$.
		Then, we obtain
		$$\mu_{r}(x)=\sum_{i=1}^{r-1}\binom{r}{i}\int_{0}^{1}\mathbf{E}\left[\mu_{i}^t\left(x-B_t\right)\mu_{r-i}^t\left(x-B_t\right)\right]dt.$$
	\end{itemize}
	Therefore, we complete the proof of Corollary \ref{r-th moment}.
\end{proof}

\subsection{Proof of Theorem \ref{th1}}

\begin{proof}[Proof of Theorem \ref{th1}]
	Note that
	$$\limsup_{r\to\infty}\mu^{1/2r}_{2r}(x)/2r<\infty,$$
	in fact, $\mu_{2r}(x)\leq \mu_{2r}(\infty)$ and $\mu^{1/2r}_{2r}(\infty)/2r<\infty$ by Remark \ref{rem1}.
	And then by Theorem 3.3.12 of Durrett \cite{Dur10} and Corollary \ref{first moment}, \ref{second moment} and \ref{r-th moment}, $\mathcal{L}\left.\left(\frac{Z^{(n)}(-\infty, \sqrt{n} x]}{n}~ \right| Z_{n}>0\right)$ converges weakly to the unique distribution with these moments.
\end{proof}

\end{document}